\documentclass[11pt]{amsart}

\usepackage[utf8]{inputenc}
\usepackage[T1]{fontenc}

\usepackage{mathrsfs,amssymb}
\usepackage{graphicx}
\begin{document}

\newtheorem{theorem}{Theorem}[section]
\newtheorem{proposition}[theorem]{Proposition}
\newtheorem{lemma}[theorem]{Lemma}
\newtheorem{corollary}[theorem]{Corollary}
\newtheorem{conjecture}[theorem]{Conjecture}
\newtheorem{question}[theorem]{Question}
\newtheorem{problem}[theorem]{Problem}
\theoremstyle{definition}
\newtheorem{definition}{Definition}

\theoremstyle{remark}
\newtheorem{remark}[theorem]{Remark}

\renewcommand{\labelenumi}{(\roman{enumi})}
\def\theenumi{\roman{enumi}}

\numberwithin{equation}{section}

\renewcommand{\Re}{\operatorname{Re}}
\renewcommand{\Im}{\operatorname{Im}}

\def\scrA{{\mathcal A}}
\def\scrB{{\mathcal B}}
\def\scrD{{\mathcal D}}
\def\scrL{{\mathcal L}}
\def\scrS{{\mathcal S}}

\def \G {{\Gamma}}
\def \g {{\gamma}}
\def \R {{\mathbb R}}
\def \C {{\mathbb C}}
\def \Z {{\mathbb Z}}
\def \Q {{\mathbb Q}}
\def \TT {{\mathbb T}}
\newcommand{\T}{\mathbb T}
\def \GinfmodG {{\Gamma_{\!\!\infty}\!\!\setminus\!\Gamma}}
\def \GmodH {{\Gamma\setminus\H}}
\def \vol {\hbox{vol}}
\def \sl  {\hbox{SL}_2(\mathbb Z)}
\def \slr  {\hbox{SL}_2(\mathbb R)}
\def \psl  {\hbox{PSL}_2(\mathbb R)}

\newcommand{\mattwo}[4]
{\left(\begin{array}{cc}
                        #1  & #2   \\
                        #3 &  #4
                          \end{array}\right) }

\newcommand{\rum}[1] {\textup{L}^2\left( #1\right)}
\newcommand{\norm}[1]{\left\lVert #1 \right\rVert}
\newcommand{\abs}[1]{\left\lvert #1 \right\rvert}
\newcommand{\inprod}[2]{\left \langle #1,#2 \right\rangle}
 
\renewcommand{\^}[1]{\widehat{#1}}

\renewcommand{\i}{{\mathrm{i}}}

\newcommand{\area}{\operatorname{area}}
\newcommand{\ecc}{\operatorname{ecc}}

\newcommand{\Op}{\operatorname{Op}}
\newcommand{\dom}{\operatorname{Dom}}
\newcommand{\Dom}{\operatorname{Dom}}
\newcommand{\tr}{\operatorname{tr}}

\newcommand{\Norm}{\mathcal N}
\newcommand{\simgeq}{\gtrsim}%
\newcommand{\simleq}{\lesssim}

\newcommand{\UN}{U_N}
\newcommand{\OPN}{\operatorname{Op}_N}
\newcommand{\HN}{\mathcal H_N}
\newcommand{\TN}{T_N}  
\newcommand{\PDO}{\Psi\mbox{DO}}

\newcommand{\intinf}{\int_{-\infty}^\infty}
\newcommand{\lcm}{\operatorname{lcm}}

\title{Small gaps in the spectrum of the rectangular billiard}

\author[V. Blomer, J. Bourgain, M. Radziwi{\l\l}  and Z. Rudnick]
{Valentin Blomer, Jean Bourgain, Maksym Radziwi{\l\l}   and Ze\'ev Rudnick}

\address{Mathematisches Institut, Bunsenstr. 3-5, 37073 G\"ottingen, Germany}
\email{vblomer@math.uni-goettingen.de}

\address{Institute for Advanced Study, 1 Einstein Drive, Princeton NJ 08540, USA}
\email{bourgain@ias.edu}

\address{Department of Mathematics, McGill University, 845 Rue Sherbrooke Ouest, Montr\'eal, Qu\'ebec H3A 0G4, Canada}
\email{maksym.radziwill@gmail.com}

\address{Raymond and Beverly Sackler School of Mathematical Sciences,
Tel Aviv University, Tel Aviv 69978, Israel}
\email{rudnick@post.tau.ac.il}

\begin{abstract}We study 
the size of the minimal gap  between the first $N$ eigenvalues of the Laplacian on a rectangular billiard having irrational squared aspect ratio $\alpha$, in comparison to the corresponding quantity for a Poissonian sequence. If $\alpha$ is a quadratic irrationality of certain type, such as the square root of a rational number, we show that the  minimal gap is roughly of size  $1/N$, which is essentially consistent with Poisson statistics. We also give  related results for a set of $\alpha$'s of full measure. However, on  a fine scale we show that Poisson statistics is violated for  all $\alpha$. The proofs use a variety of ideas of an  arithmetical nature, involving Diophantine approximation, the theory of continued fractions, and results in analytic number theory. 
\end{abstract}

\keywords{rectangular billard, Poisson statistics, small gaps, diophantine approximation, divisibility sequence, Chebyshev polynomials, Dirichlet polynomials,  Riemann zeta-function 
}
\subjclass[2010]{Primary: 35P20,  11E16; Secondary: 11J04, 11B39, 11L07}

\thanks{V.B. was supported in part by the Volkswagen Foundation and NSF grant 1128155 while enjoying the hospitality of the Institute for Advanced Study.  J.B. was partially supported by NSF grant  DMS-1301619. M.R. is funded by the National Science and
Engineering Research Council of Canada. 
Z.R. was supported by the Friends of the Institute for Advanced Study, 
and by the European Research Council under the European Union's Seventh
Framework Programme (FP7/2007-2013)/ERC grant agreement
n$^{\text{o}}$ 320755. The United States Government is authorized to reproduce and distribute reprints notwithstanding any copyright notation herein. }

\date{\today}
\maketitle

\section{Introduction} 

The 
local statistics of the energy levels of several integrable systems are believed to follow Poisson statistics \cite{BT}. In this note we examine a variant of these statistics,  the size of the minimal gap between levels,  for the energy levels of a particularly simple  system,   
a rectangular billiard. If the rectangle has width $ \pi/\sqrt{\alpha}$ and height $ \pi$, with aspect ratio $\sqrt{\alpha}  $, then  the energy levels, meaning the eigenvalues of the Dirichlet Laplacian, consist  of the numbers $  \alpha m^2+n^2$ with integers $m,n\geq 1$.  

The case of rational  $\alpha$ is special: The eigenvalues lie in a lattice,    in particular the nonzero gaps are bounded away from zero, and there are arbitrarily large multiplicities. We exclude this case from our discussion.  
If $\alpha$ is irrational, we get a simple spectrum $ 0<\lambda_1<\lambda_2<\cdots$,  with growth (Weyl's law)
$$\#\{j:\lambda_j\leq X\}=\#\{(m,n): m,n\geq 1, \;  \alpha m^2+n^2\leq X\}\sim  \frac{\pi}{4\sqrt{\alpha}} X  
$$
as $X \rightarrow \infty$. 
In this setting, the pair correlation function has been shown to be Poissonian \cite{ EMM}  for Diophantine $\alpha$, 
see also \cite{Sarnak} for a related problem. 

We wish to study the size of the minimal gap function of the spectrum, defined as 
$$ \delta_{\min}^{(\alpha)}(N)  = \min(\lambda_{i+1}-\lambda_i: 1\leq i<N).$$
 To set expectations, it is worth comparing with the size of the analogous quantity for some random sequences, when measured on the scale of the mean spacing between the levels in the sequence, which in our case is constant (equal to  $4\sqrt{\alpha}/\pi$).  
For a Poissonian sequence of $N$ uncorrelated levels with unit mean spacing, the smallest gap is almost surely of size  $\approx 1/N$ \cite{Levy}. In comparison, the smallest gap between the eigenphases of a random $N\times N$ unitary matrix is, on the scale of the mean spacing,  
 almost surely of size $ \approx N^{-1/3}$ \cite{Vinson, BB}, in particular much larger than the Poisson case. The same behaviour persists for the eigenvalues of random $N\times N$ Hermitian matrices (the Gaussian Unitary Ensemble) \cite{Vinson, BB}.  For the Gaussian Orthogonal Ensemble of random symmetric matrices, is is expected (though as of now not proved) that the minimal  gap is of size $N^{-1/2}$. 
We note that the local statistics of the eigenvalues of the Laplacian for generic chaotic systems, such as non-arithmetic surfaces of negative curvature, are expected to follow the Gaussian Orthogonal Ensemble \cite{BGS}, while the local statistics of the zeros of the Riemann zeta function are expected to follow the Gaussian Unitary Ensemble \cite{Montgomery, RS}.

  \subsection{Order of growth of $\delta^{(\alpha)}_{\min}(N)$}
Returning to our rectangular billiard,  it is not hard to obtain lower bounds for $\delta_{\min}^{(\alpha)}(N)$, see \S~\ref{sec:def of diophantine}. 
In the case of   quadratic irrationalities,  the gap function cannot shrink faster than $1/N$:  for each quadratic irrationality $\alpha$, there is some $c(\alpha)>0$ so that 
\begin{equation}\label{lower bd quad irr}
  \delta_{\min}^{(\alpha)}(N) \geq \frac{c(\alpha)}{N} \;.
\end{equation}
More generally, both for {\em algebraic} irrationalities and for almost every $\alpha$ (in the measure theoretic sense)  the same argument shows 
\begin{equation}\label{lower2}
  \delta_{\min}^{(\alpha)}(N) \gg  1/N^{1+\varepsilon}
\end{equation}
for any $\varepsilon > 0$, see Proposition \ref{1/N} below. Both \eqref{lower bd quad irr} and \eqref{lower2}   depend on general results in diophantine approximation. 

In \eqref{lower2} and elsewhere in the paper, we use Vinogradov's notation $f(N)\ll g(N)$ to mean that there are $c>0$ and $N_0\geq 1$ so that $|f(N)|\leq c |g(N)|$ for all $N>N_0$; and the notation $f(N)\asymp g(N)$ to mean both $f(N)\ll g(N)$ and $g(N)\ll f(N)$. Implied constants may always depend on $\alpha$ and $\varepsilon$ where applicable.

Much more work needs to be done to obtain good upper bounds for $\delta_{\min}^{(\alpha)}(N)$, i.e.\ to explicitly construct small gaps. 

 We show in Proposition \ref{prop:allalpha} below  that for any irrational  $\alpha$, we have 
\begin{equation} \label{trivial}
\delta_{\min}^{(\alpha)}(N) \ll  N^{-1/2}
\end{equation}
 for all $N$. By the same argument, we can also display $\alpha$ where $\delta_{\min}^{(\alpha)}(N) \ll N^{-A}$ for any $A > 0$   by taking $\alpha$ to be suitable Liouville numbers. However these form a measure zero set and are atypical.  

For certain quadratic irrationalities we show that  the minimal gap  can be almost as small as $1/N$: 
\begin{theorem}\label{thm:main}
If  the squared aspect ratio is a quadratic irrationality of the form $\alpha =\sqrt{r}$, with $r$ rational, then 
\begin{equation*}
 \delta_{\min}^{(\alpha)}(N) \ll \frac 1{N^{1-\varepsilon}}
 \end{equation*}
for every $\varepsilon > 0$ and all $N$. 
\end{theorem}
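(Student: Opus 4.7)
The goal is to exhibit distinct pairs $(m_1, n_1), (m_2, n_2) \in \Z_{\geq 1}^2$ with $m_i, n_i \leq C\sqrt{N}$ and $|\sqrt{r}(m_1^2 - m_2^2) + (n_1^2 - n_2^2)| \ll N^{-1+\varepsilon}$. Writing $u = m_1^2 - m_2^2$ and $v = n_2^2 - n_1^2$, this becomes the following problem: find integers $u, v$ with $|u|, |v| \leq N$ and $|\sqrt{r}\,u - v| \ll N^{-1+\varepsilon}$, such that each of $u, v$ admits a representation as a difference of two squares of positive integers $\leq C\sqrt{N}$ --- equivalently, each of $u, v$ must have a divisor in the "balanced" window $[\sqrt{|u|}, 2C\sqrt{N}]$ whose cofactor has matching parity. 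After clearing denominators I may assume that $r \in \Z_{>0}$ is a squarefree non-square.

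The Diophantine supply comes from the Pell equation. Powers of the fundamental unit $\epsilon$ of $\Z[\sqrt{r}]$ produce pairs $(P_k, Q_k)$ with $|\sqrt{r}\,Q_k - P_k| \asymp Q_k^{-1}$ and $Q_k \asymp \epsilon^k$ growing geometrically. I pick $k$ with $Q := Q_k \in [N^{1-\varepsilon/2}, \epsilon N^{1-\varepsilon/2}]$ (possible by geometric growth), set $P := P_k$, and consider the family $u = \ell Q$, $v = \ell P$ for $\ell = 1, 2, \ldots, L$ with $L := \lfloor N^{\varepsilon/2} \rfloor$. Then automatically $|\sqrt{r}\,u - v| = \ell\,|\sqrt{r}Q - P| \ll \ell/Q \leq N^{-1+\varepsilon}$ and $u, v \leq N$, so it remains only to locate $\ell \leq L$ for which both $\ell Q$ and $\ell P$ carry a balanced divisor of the correct parity.

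This last step is the crux and the main obstacle. Ford's refinement of Erd\H os's multiplication-table theorem tells us that only a $(\log N)^{-\delta}$-fraction of integers of size $N$ has a divisor in the balanced window, so I must realise this density inside the short, rigid progression $\{\ell Q\}_{\ell \leq L}$ \emph{simultaneously} with the analogous condition on $\{\ell P\}_{\ell \leq L}$. My plan is to represent the indicator ``$n$ has a divisor in $[A, B]$'' by a Mellin contour integral involving $\zeta$-factors, turning the count into a double contour integral against Dirichlet polynomials $\sum_{\ell \leq L}(\ell Q)^{-s}$ and $\sum_{\ell \leq L}(\ell P)^{-s'}$. Second-moment mean-value estimates for such Dirichlet polynomials, combined with the Chebyshev-polynomial structure of the Pell sequence (identities like $Q_{2k} = 2P_k Q_k$ and $P_{2k} = 2P_k^2 \mp 1$, and the divisibility relation $Q_a \mid Q_{ab}$), should produce an admissible count $\gg L/(\log N)^{O(1)} \gg 1$. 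The parity restriction (excluding $u, v \equiv 2 \pmod 4$) merely places $\ell$ in certain residue classes modulo $4$, a harmless constraint. The delicate point throughout is the simultaneity: each balanced-divisor condition is individually a $(\log N)^{-\delta}$-event, and proving that the joint event is of comparable density --- rather than much smaller --- is what forces the analytic-number-theoretic machinery into the argument, since purely algebraic manipulations via Pell identities give balanced factorisations for either $Q_{2k}$ or $P_{2k}$ but a parity clash generically obstructs both at once for a single $k$.
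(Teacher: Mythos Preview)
Your reduction to finding approximants $p/q$ to $\alpha$ with both $p$ and $q$ evenly divisible is exactly right, and the Pell equation is indeed the correct source of approximants. But the plan to produce the balanced divisors by multiplying a fixed Pell pair $(P,Q)$ by small integers $\ell\le L=N^{\varepsilon/2}$ and then averaging analytically has a structural gap that no amount of Mellin--contour or mean--value machinery can repair.

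The problem is this: the divisor structure of $\ell Q$ is the set $\{d_1 d_2 : d_1\mid \ell,\ d_2\mid Q\}$. Since $d_1\le \ell\le N^{\varepsilon/2}$ while $Q\asymp N^{1-\varepsilon/2}$, whether $\ell Q$ has a divisor near $N^{1/2}$ is governed entirely by the divisor structure of the \emph{single fixed integer} $Q=Q_k$, over which you have no control. If, say, $Q_k$ is prime (and nothing prevents this), then every divisor of $\ell Q$ lies in $[1,N^{\varepsilon/2}]\cup[N^{1-\varepsilon/2},N]$, and the balanced window is empty for every $\ell$. The ``$(\log N)^{-\delta}$-event'' heuristic you invoke applies to random integers of size $N$, not to integers of the form $\ell Q$ with $Q$ fixed and $\ell$ tiny; here the event can have density zero. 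The same objection applies to $\ell P$.

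The paper's argument avoids this by exploiting the strong divisibility property you mention but do not actually use: from $\gcd(Q_m,Q_n)=Q_{\gcd(m,n)}$ (and the analogous statement for $P_n=2T_n(x/2)$ when $m,n$ are odd) one \emph{constructs} divisors of $Q_n$ of any prescribed logarithmic size. Concretely, choose odd primes $\ell_1<\cdots<\ell_J$ with $\tfrac12-\varepsilon<1-\prod_j(1-1/\ell_j)<\tfrac12$, take $n=\ell_1\cdots\ell_J\cdot m$ with $m$ odd and coprime to the $\ell_j$, and set $D=\lcm(Q_{n/\ell_1},\dots,Q_{n/\ell_J})$. Each $Q_{n/\ell_j}$ divides $Q_n$, hence so does $D$; and inclusion--exclusion together with $\log Q_a\sim a\log\epsilon$ gives $\log D\sim \bigl(1-\prod_j(1-1/\ell_j)\bigr)\log Q_n\approx\tfrac12\log Q_n$. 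The identical construction works for $P_n$. This is a deterministic, purely algebraic production of balanced divisors \emph{simultaneously} for $P_n$ and $Q_n$, with no averaging required; the analytic apparatus you sketch is what the paper uses for the almost-all-$\alpha$ theorem, not for the quadratic case.
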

We can also deal with other quadratic irrationalities, such as the golden mean. We refer to Section \ref{other} for more general results. In particular, we show in this section that there exist quadratic irrationalities $\alpha$ such that the  stronger result 
\begin{equation}\label{exact}
\delta_{\min}^{(\alpha)}(N) \ll 1/N
\end{equation}
holds for all $N$. An explicit example is the square of the golden mean 
$\alpha  =  (3 + \sqrt{5})/2.$

Moving away from quadratic irrationalities, where our results are deterministic, 
we turn to generic in measure $\alpha$.  
\begin{theorem}\label{thm:zeta} 
For almost all $\alpha > 0$ (in the sense of Lebesgue measure) we have  
\begin{equation}\label{conditional}
 \delta^{(\alpha)}_{\min}(N) \ll  \frac{1}{N^{1-\varepsilon}}
 \end{equation}
for any $\varepsilon > 0$ and all $N$. 
\end{theorem}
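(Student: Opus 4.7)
The plan is to combine a variance (second-moment) argument with the Borel--Cantelli lemma. Since $N\mapsto\delta_{\min}^{(\alpha)}(N)$ is non-increasing, it suffices to prove the bound along the geometric sequence $N_k=2^k$; and since the conclusion is local in $\alpha$, I work on a compact interval $I\subset(0,\infty)$, say $I=[1,2]$, with Lebesgue measure (the general case follows by a countable exhaustion).

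Fix $\varepsilon>0$, set $\eta=N^{-1+\varepsilon}$, and choose $T\asymp N$ so that the first $N$ eigenvalues $\lambda_i=\alpha m_i^2+n_i^2$ lie in $[0,T]$. Introduce the counting function
\[
R_N(\alpha)=\#\bigl\{((m_1,n_1),(m_2,n_2)) : m_i,n_i\geq 1,\ \alpha m_i^2+n_i^2\leq T,\ (m_1,n_1)\neq(m_2,n_2),\ |\lambda_1-\lambda_2|<\eta\bigr\},
\]
so that $R_N(\alpha)>0$ implies $\delta_{\min}^{(\alpha)}(N)<\eta$. For each quadruple with $m_1\neq m_2$, the one-variable Diophantine condition $|\alpha a+b|<\eta$ (where $a=m_1^2-m_2^2$, $b=n_1^2-n_2^2$) has $\alpha$-solution set in $I$ of measure $\asymp\eta/|a|$ whenever $-b/a\in I$; the degenerate case $m_1=m_2$ contributes nothing, since then $|\lambda_1-\lambda_2|\geq 1>\eta$. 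Summing over all quadruples, an elementary first-moment computation yields
\[
\int_I R_N(\alpha)\,d\alpha \;\gg\; \eta T \;=\; N^{\varepsilon},
\]
a divergent quantity as $N\to\infty$.

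The heart of the argument is a matching second-moment bound of the shape
\[
\int_I R_N(\alpha)^2\,d\alpha \;\leq\; \Bigl(\int_I R_N(\alpha)\,d\alpha\Bigr)^{\!2}(1+o(1)) \;+\; O(\mathbb{E} R_N).
\]
Once this is in hand, Chebyshev's inequality gives $|\{\alpha\in I : R_N(\alpha)=0\}|\leq \mathrm{Var}(R_N)/(\mathbb{E} R_N)^2\ll 1/\mathbb{E} R_N\ll N^{-\varepsilon}$, which is summable along $N_k=2^k$; Borel--Cantelli then yields $R_{N_k}(\alpha)>0$ for all sufficiently large $k$ and almost every $\alpha\in I$, and monotonicity of $\delta_{\min}^{(\alpha)}(\cdot)$ extends the conclusion to all $N$. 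Expanding the square, the off-diagonal terms involve pairs of distinct quadruples for which \emph{both} Diophantine relations $|\alpha a_1+b_1|<\eta$ and $|\alpha a_2+b_2|<\eta$ hold; eliminating $\alpha$ forces the integer constraint $|a_1 b_2-a_2 b_1|\leq 2\eta T\ll N^{\varepsilon}$, in which each $a_i,b_i$ is itself a difference of two squares of size at most $T$.

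The main obstacle is to carry out the off-diagonal estimate sharply enough: one must show that arithmetic correlations among these structured differences of squares, subject to the near-vanishing of the bilinear form $a_1 b_2-a_2 b_1$, do not inflate $\mathbb{E}[R_N^2]$ beyond the factorized main term by more than $O(\mathbb{E} R_N)$. Counting the relevant quadruples reduces to shifted divisor-type sums, and the sharp control I expect to need enters through analytic number theory, specifically Dirichlet polynomial mean-value estimates and moments of $\zeta(\tfrac12+it)$ on the critical line.
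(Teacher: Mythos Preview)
Your overall architecture is exactly the paper's: reduce to a compact interval, set up a counting function, prove a lower bound via first/second moment, feed the exceptional set into Borel--Cantelli along a geometric sequence, and use monotonicity of $\delta_{\min}^{(\alpha)}$ to pass to all $N$. Your first-moment sketch is also correct.

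The gap is in the second moment. The off-diagonal condition $|a_1b_2-a_2b_1|\ll N^{\varepsilon}$, with each $a_i,b_i$ a product of two integers of size $\ll N^{1/2}$, is an \emph{eight-variable} problem; after the natural harmonic analysis it becomes precisely an eighth moment of a Dirichlet polynomial of length $\asymp N$ over an interval of length $\asymp N$, i.e.\ morally $\int_0^N|\zeta(\tfrac12+it)|^8\,dt$. No bound of the required strength is known unconditionally; your reference to ``moments of $\zeta(\tfrac12+it)$ on the critical line'' would only close the argument under something like the Lindel\"of hypothesis. Your asserted variance inequality $\int R_N^2\le(\int R_N)^2(1+o(1))+O(\mathbb{E}R_N)$ is therefore not established.

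The paper sidesteps this obstruction by an artificial bilinear splitting: it replaces each variable $n\asymp M$ by a product $n=n'm'$ with $n'\asymp M^{\eta/4}$ short and $m'\asymp M^{1-\eta/4}$ long. After Fourier expansion the integrand factors as $|\Sigma(M^{\eta/4},y)|^4\,|\Sigma(M^{1-\eta/4},y)|^4$. The eight \emph{long} variables are controlled by the mean-value theorem $\int_0^X|\sum_{n\le N}a_n n^{it}|^2\,dt\ll(X+N)\sum|a_n|^2$ (which is tight here since the Dirichlet polynomial has length $\asymp X$), and the eight \emph{short} variables supply the saving via the Richert/Heath-Brown bound $|\zeta(\sigma+it)|\ll|t|^{A(1-\sigma)^{3/2}+\varepsilon}$ near $\sigma=1$, \emph{not} on the critical line. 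This is the missing idea; without it your second-moment step does not go through unconditionally.
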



We summarize the preceding results by stating that the order of growth of $\delta_{\min}^{(\alpha)}(N) \approx 1/N$ is consistent with Poisson statistics for certain special and also generic in measure $\alpha$. 
However, as we now explain, finer details of Poisson statistics are always violated. 

 \subsection{Deviations from Poisson statistics}
  Given a sequence of   points, 
let $ \delta_{\min, k}(N)$  be the $k$-th smallest gap  ($k\geq 1$) among the first $N$ points in the sequence,  
so that in particular  $\delta_{\min, 1}(N)  =  \delta_{\min}(N)$. 
For a Poisson sequence with unit mean spacing (by which we mean $N$ points picked independently and uniformly in $[0,N]$), Devroye \cite{Devroye} showed that for any fixed $k \geq 1$ and any sequence $\{u_n\}$ of positive numbers such that $u_n/n^2$ is decreasing we have 
\begin{equation}\label{dev}
\text{Prob}\Big(N \delta_{\min, k}(N) \leq u_N \text{ infinitely often}\Big) = \begin{cases} 1, & \sum_n u_n^k/n = \infty,\\  \\0, & \sum_n u_n^k/n < \infty. \end{cases}
\end{equation}
Choosing for instance $u_n = 1/\log n$ for $k=1$, one has 
\begin{equation}\label{dev1}
\delta_{\min}(N) \leq \frac{1}{N \log N} \quad\quad \text{infinitely often}
\end{equation}
almost surely, while choosing $u_n = 1/(\log n)^{2/3}$ for $k = 2$ one has 
\begin{equation}\label{dev1a}
\delta_{\min, 2}(N) \geq \frac{1}{N (\log N)^{2/3}} \quad\quad \text{for all sufficiently large $N$}
\end{equation}
almost surely. 
Similarly, it is shown in \cite[Theorem 4.2]{Devroye} that
\begin{equation}\label{dev2}
\delta_{\min}(N) \geq \frac{\log\log N}{N } \quad\quad \text{infinitely often}
\end{equation}
almost surely, but by \cite[Theorem 4.1]{Devroye} we have
\begin{equation}\label{dev3}
 \text{Prob}\left( \delta_{\min}(N) \geq \frac{(\log\log N)^2}{N} \text{ infinitely often}\right)  = 0.
\end{equation} 

For our sequence $\{\alpha m^2+n^2\}$, we infer from \eqref{lower bd quad irr} that in the case of quadratic irrationalities 
 \eqref{dev1} is violated. 
The following result  shows that \eqref{dev3} is violated for \emph{almost all} $\alpha$: 
\begin{theorem}\label{thm:mult table}
For almost all $\alpha>0$ (in the sense of Lebesgue measure) we have  
\begin{equation}\label{mult table}
 \delta^{(\alpha)}_{\min}(N)  \gg \frac{(\log N)^c}{N}  \quad\quad \text{infinitely often}
\end{equation}
where  
$c = 1 - \frac{\log (e \log 2)}{\log 2} = 0.086\ldots$. 
\end{theorem}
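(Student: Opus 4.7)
The plan is a first-moment bound along a sparse subsequence, combined with Borel-Cantelli. Fix a compact interval $I\subset(0,\infty)$; by a countable union over such intervals it suffices to prove the conclusion for almost every $\alpha\in I$. Choose $N_k = \exp(\exp(k))$ and let
$$E_k = \{\alpha\in I : \delta^{(\alpha)}_{\min}(N_k) < (\log N_k)^c/N_k\}.$$
The goal is to show $\mu(E_k)\ll (\log N_k)^{-c}$; since this equals $e^{-ck}$ and is summable, Borel-Cantelli yields $\delta^{(\alpha)}_{\min}(N_k)\geq (\log N_k)^c/N_k$ for all large $k$ almost surely, and the infinitely often conclusion follows at once.

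A gap $|\lambda_i-\lambda_j|<\delta$ between two eigenvalues $\alpha m_i^2+n_i^2$ is equivalent to $|\alpha A-B|<\delta$ where
$$A = m_1^2-m_2^2 = (m_1-m_2)(m_1+m_2),\qquad B = n_2^2-n_1^2 = (n_2-n_1)(n_2+n_1).$$
If $A=0$ then $n_1\neq n_2$ and the gap is at least $3$, so we may assume $A\neq 0$, in which case the condition reads $|\alpha-B/A|<\delta/|A|$. A union bound over the \emph{distinct} pairs $(A,B)$ produced by 4-tuples whose eigenvalues lie in $[0,\lambda_{N_k}]$ (so $|A|,|B|\ll N_k$) and for which $B/A$ lies in a slight enlargement of $I$ gives
$$\mu(E_k) \leq \sum_{(A,B)} \frac{2\delta}{|A|}.$$
The key point is that many different 4-tuples can produce the same pair $(A,B)$, so the union bound is over pairs and avoids the wasteful divisor-type overcount that would arise from summing over 4-tuples.

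The essential observation is that both $A$ and $B$ are differences of two positive squares, hence elements of the Erd\H{o}s multiplication table of integers whose factors are $\ll\sqrt{N_k}$. Ford's refinement of the Erd\H{o}s multiplication-table theorem asserts that the number of distinct such integers up to $Y$ is $\asymp Y/(\log Y)^c$ (with a $(\log\log Y)^{3/2}$ correction), where $c = 1 - \log(e\log 2)/\log 2$ is precisely the constant in the theorem. For each admissible $A$ of size $\asymp N_k$, the admissible $B$ lie in an interval of length $\asymp |A|$ and the same density produces $\ll |A|/(\log N_k)^c$ such $B$, while the number of admissible $A$ is $\ll N_k/(\log N_k)^c$. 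Assembling these estimates (with a routine dyadic decomposition showing that smaller ranges of $|A|$ contribute no more),
$$\mu(E_k) \ll \frac{\delta\cdot N_k}{(\log N_k)^{2c}} = (\log N_k)^{c-2c} = (\log N_k)^{-c}.$$

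The main technical obstacle is the careful invocation of Ford's theorem for the specific difference-of-two-squares structure of $A$ and $B$, together with the uniform validity of the multiplication-table density $(\log Y)^{-c}$ on subintervals of length comparable to the scale itself. The product of two independent multiplication-table counts produces the exponent $2c$ in the denominator, exactly enough to defeat the numerator $(\log N)^c$ and leave a genuine (if slow) polylogarithmic decay. The sparseness of $N_k$ then promotes this decay to summability, closing the Borel-Cantelli step; taking a countable union over $I=[1/m,m]$, $m\in\mathbb{N}$, yields the result for almost every $\alpha>0$.
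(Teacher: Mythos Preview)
Your approach shares the paper's key insight --- that $A=m_1^2-m_2^2$ lies in the multiplication table of side $\asymp\sqrt N$, so Ford's theorem controls the number of distinct $A$ --- but you overcomplicate matters and leave a genuine gap.

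\textbf{The gap.} Your bound $\mu(E_k)\ll(\log N_k)^{-c}$ rests on obtaining a second factor $(\log N_k)^{-c}$ from the $B$-side, i.e.\ on the claim that the number of admissible $B$ in the interval $I\cdot A$ is $\ll |A|/(\log N_k)^{c}$. This is a statement about the density of multiplication-table entries of side $\sqrt{N_k}$ in an interval of length $|A|$, and it is \emph{not} routine. For $|A|\le C\sqrt{N_k}$ the multiplication-table condition on $B$ (and on $A$) is vacuous --- every integer up to $C\sqrt{N_k}$ is in the table --- so there is no saving at all; and for $|A|=N_k^{1-\eta}$ the required divisor lies in an interval of ratio $N_k^{\eta}$, where Ford's density tends to~$1$ rather than to $(\log N_k)^{-c}$. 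Thus only the top $O(1)$ dyadic blocks enjoy the full $(\log N_k)^{-2c}$ saving, and summing over blocks reintroduces losses. Your ``routine dyadic decomposition'' does not go through as stated.

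\textbf{How the paper sidesteps this, and how to fix your argument.} The paper never sums over $B$: it uses instead that $\mu\{\alpha\in I:\|\alpha q\|<1/X\}\ll 1/X$ for each $q$, which already absorbs the $B$-dependence. One then needs only Ford's bound on the number of distinct $q=uv$ with $u,v\ll\sqrt N$, namely $\ll N(\log N)^{-c}(\log\log N)^{-2/3}$, giving
\[
\mu(E_N)\ \ll\ (\log\log N)^{-2/3}\ \longrightarrow\ 0.
\]
This is already enough: the complement of ``$\delta_{\min}^{(\alpha)}(N)\ge(\log N)^c/N$ infinitely often'' is $\liminf_N E_N$, and $\mu(\liminf E_N)\le\liminf\mu(E_N)=0$. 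No subsequence and no Borel--Cantelli are needed. If you prefer your framework, note that along $N_k=\exp(\exp k)$ the paper's one-sided bound gives $\mu(E_k)\ll k^{-2/3}$\,... not summable; but the full Ford bound has $(\log\log N)^{-3/2}$, giving $k^{-3/2}$, which is. Either way, drop the unjustified $B$-side saving and rely on the $\log\log$ factor in Ford's theorem rather than on a second $(\log N)^{-c}$.
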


In fact, for \emph{all} $\alpha$ we show 
\begin{theorem}\label{thm:nonpoisson}
 For any $\alpha > 0$, at least one of the conditions   \eqref{dev1} or  \eqref{dev1a} is violated.  
\end{theorem}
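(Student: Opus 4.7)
The strategy is to argue by contradiction, exploiting the scaling symmetry $(m,n)\mapsto(km,kn)$ of the spectrum: if a pair of lattice points realizes a very small gap, then so does its scale-$k$ image, at an energy $k^{2}$ times larger. Suppose that both \eqref{dev1} and \eqref{dev1a} hold for our $\alpha$. By \eqref{dev1} I may choose arbitrarily large $N$ with $\delta_{\min}^{(\alpha)}(N)\leq 1/(N\log N)$, and let $(m_{1},n_{1}),(m_{2},n_{2})$ be the lattice points realizing this minimum, so that the consecutive eigenvalues $\lambda_{i}<\lambda_{i+1}$ satisfy $\lambda_{i+1}-\lambda_{i}=\varepsilon\leq 1/(N\log N)$.

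I would then consider the doubled pair $(2m_{1},2n_{1}),(2m_{2},2n_{2})$. Since the spectrum is simple ($\alpha$ being irrational), these correspond to the distinct eigenvalues $4\lambda_{i}$ and $4\lambda_{i+1}$, differing by exactly $4\varepsilon$. By Weyl's law both lie within the first $M=5N$ eigenvalues once $N$ is large. Other lattice points may contribute eigenvalues strictly between $4\lambda_{i}$ and $4\lambda_{i+1}$, but the consecutive gaps partitioning the interval $[4\lambda_{i},4\lambda_{i+1}]$ still sum to $4\varepsilon$, so at least one of these consecutive gaps, call it $\varepsilon'$, satisfies $\varepsilon'\leq 4\varepsilon$.

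It remains to verify that this new small gap is genuinely distinct from the original one. Since $\lambda_{1}=\alpha+1>0$, for $N$ large one has $\varepsilon<3\lambda_{i}$, hence $\lambda_{i+1}<4\lambda_{i}$; the intervals $[\lambda_{i},\lambda_{i+1}]$ and $[4\lambda_{i},4\lambda_{i+1}]$ are therefore disjoint, and the new gap corresponds to a different pair of consecutive indices in the spectrum. Consequently $\delta_{\min,2}^{(\alpha)}(5N)\leq \varepsilon'\leq 4\varepsilon\leq 4/(N\log N)$. But for $N$ sufficiently large this is strictly smaller than $1/(5N(\log 5N)^{2/3})$, contradicting \eqref{dev1a} applied at scale $5N$.

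The main ingredients are thus only the scaling symmetry of the lattice $\{\alpha m^{2}+n^{2}:m,n\geq 1\}$, Weyl's law (for bookkeeping the index of the scaled eigenvalues), and positivity of $\lambda_{1}$. The one point that warrants care is confirming that the doubled pair gives rise to a \emph{distinct} consecutive gap; this is handled by the disjointness of $[\lambda_{i},\lambda_{i+1}]$ and $[4\lambda_{i},4\lambda_{i+1}]$ for large $N$. No Diophantine hypothesis on $\alpha$ is required, which is fortunate since the theorem is to hold for every irrational $\alpha$.
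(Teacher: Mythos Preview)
Your argument is correct and follows essentially the same approach as the paper's own proof: both exploit the scaling symmetry $\lambda\mapsto 4\lambda$ of the spectrum to manufacture, from one small consecutive gap, a second small consecutive gap at a different index, thereby forcing $\delta_{\min,2}$ to be small and contradicting \eqref{dev1a}. Your write-up is in fact slightly more careful than the paper's in that you explicitly address the possibility of intermediate eigenvalues between $4\lambda_i$ and $4\lambda_{i+1}$ and verify disjointness of the two gap intervals, points the paper leaves implicit.
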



It is also  of interest to study  the distribution of the \emph{largest} gap. 
One does expect arbitrarily large gaps, and it is a challenging problem to prove this for Diophantine $\alpha$. 

\subsection{About the proofs}\label{proofs}
The proofs draw from a  variety of methods. 
We show in Section \ref{sec: strategy}  (see Lemma \ref{general}) that the size of $\delta_{\min}^{(\alpha)}(N)$ depends on the existence of good 
rational approximants $p/q$ to $\alpha$, where both $p$ and $q$ are \emph{evenly divisible}, by which we mean   integers $n$ having a divisor $d\mid n$ roughly of size square root:  $$\min(d,n/d)\gg n^{1/2-\varepsilon}$$  
for any $\varepsilon > 0$.  
 We will see that the concept of evenly divisible numbers comes up naturally in the context of finding small gaps, although we have not seen it in other number theoretical applications.   


To find such approximants for certain quadratic irrationalities, for instance  $\alpha=\sqrt{D}$ as in Theorem \ref{thm:main}, for integer $D>1$ not a perfect square, we use the theory of Pell's equation to show 
that there are many approximants $p_n/q_n$ for which both of the sequences $\{p_n\}$ and $\{q_n\}$ satisfy a ``strong divisibility" condition of the form $$\gcd(a_m,a_n) = a_{\gcd(m,n)},\quad m,n\;{\rm odd}.
$$ 
This  condition can be used to produce ``good" divisors. 

Theorem \ref{thm:zeta} uses a second moment approach to obtain a result valid for almost all $\alpha$. The corresponding counting problem that produces evenly divisible approximants is analyzed by exponential sums, and becomes naturally a problem in 4 variables, so that the second moment produces an eighth moment of the Riemann zeta-function. In absence of the Lindel\"of hypothesis, we introduce artificially a bilinear structure, separating the 4 variables into 4 short ones and 4 long ones;  we obtain an unconditional saving on the short variables using strong bounds for the Riemann zeta function $\zeta(s)$ near the line $\Re(s)=1$ based on Vinogradov's method, and 
handle the contribution of the long variables using a mean-value theorem. 
The general scheme of this method has already found further applications in connection with the Oppenheim conjecture for ternary quadratic forms \cite{Bourgain}. 

To prove the lower bound in Theorem~\ref{thm:mult table}, we invoke Ford's  quantitative  version \cite{Fo} of the result first proved by Erd\H{o}s \cite{Er1} that a multiplication table of side length $X$ contains $o(X^2)$ different  entries, which gives restrictions on the arithmetic properties of approximants.

\subsection*{Acknowledgement} The authors would like to thank Peter Sarnak for useful comments, and the referee for a very careful reading of the manuscript.

\section{Some general results} 

 \subsection{Lower bounds}\label{sec:def of diophantine}

An irrational $\alpha$  is {\em badly approximable} if  for all integers $(p,q)$ with $q\geq 1$  we have 
\begin{equation}\label{21}
 |q\alpha-p|\gg \frac 1q \;.
\end{equation}
It is (strongly) Diophantine if  we have the weaker inequality 
\begin{equation}\label{22}
|q\alpha-p|\gg  \frac 1{q^{1+\varepsilon}} \quad \text{for all} \quad  \varepsilon>0 \;.
\end{equation}

We recall \cite{Khinchine} that  $\alpha$ being badly approximable is equivalent to having bounded partial quotients in the continued fraction expansion of $\alpha$. 
Thus quadratic irrationalities are badly approximable. 
The set of badly approximable reals has measure zero. However   the set of (strongly) Diophantine numbers has full measure. 
Roth's theorem says that all algebraic irrationalities are (strongly) Diophantine.  For a full measure set of $\alpha$, one in fact has a stronger lower bound \cite{Khinchine}: For every $\varepsilon>0$, we have 
\begin{equation}\label{23}
|q\alpha-p|\gg \frac 1{q(\log q)^{1+\varepsilon}} . 
\end{equation}
for all $q \geq 2$. 

\begin{proposition}\label{1/N} Let $\alpha > 0$.

i) Suppose $\alpha\in \R\backslash \Q$ is badly approximable. Then for all $N$ we have 
$$ \delta_{\min}^{(\alpha)}(N)\gg \frac 1N\;.
$$

ii) If $\alpha \in \R\backslash \Q$ is (strongly) Diophantine, then for all $\varepsilon > 0$ and all  $N$ we have 
$$ \delta_{\min}^{(\alpha)}(N)\gg \frac 1{N^{1+\varepsilon}}\;.
$$

iii) For Lebesgue almost all $\alpha$, for all $\varepsilon>0$ and all $N$ we have 
$$
 \delta_{\min}^{(\alpha)}(N)\gg \frac 1{N (\log N)^{1+\varepsilon}}\;.
$$
\end{proposition}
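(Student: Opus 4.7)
The plan is to reduce the size of any eigenvalue gap to a Diophantine approximation of $\alpha$ by rationals of bounded height, to which the hypotheses \eqref{21}, \eqref{22}, \eqref{23} apply directly.

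First, by the Weyl asymptotic stated in the introduction, $\lambda_N \leq C_\alpha N$ for all sufficiently large $N$; hence every $\lambda_j = \alpha m^2 + n^2$ with $j \leq N$ has $m, n \ll_\alpha N^{1/2}$.

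Second, let $\lambda = \alpha m_1^2 + n_1^2$ and $\lambda' = \alpha m_2^2 + n_2^2$ be any two distinct elements of $\{\lambda_1,\ldots,\lambda_N\}$. Writing
\[
\lambda - \lambda' = \alpha a + b, \qquad a := m_1^2 - m_2^2, \ \ b := n_1^2 - n_2^2,
\]
we have $(a,b) \in \mathbb{Z}^2 \setminus \{(0,0)\}$ and $|a|, |b| \ll_\alpha N$. In the degenerate subcases the gap is of order $1$: if $a = 0$ then $|\lambda - \lambda'| = |b| \geq 1$, and if $b = 0$ with $a \neq 0$ then $|a| = |m_1 - m_2|(m_1 + m_2) \geq 3$ (as $m_1, m_2 \geq 1$ are distinct), so $|\lambda - \lambda'| \geq 3\alpha$. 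In the generic case $a \neq 0 \neq b$, the same factorisation gives $q := |a| \geq 3$, and choosing $p \in \mathbb{Z}$ with the appropriate sign we obtain
\[
|\lambda - \lambda'| = |q\alpha - p|, \qquad 3 \leq q \ll_\alpha N.
\]

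Third, substituting into \eqref{21}, \eqref{22}, \eqref{23} yields the bounds $|\lambda - \lambda'| \gg 1/q \gg_\alpha 1/N$, $\gg 1/q^{1+\varepsilon} \gg_{\alpha,\varepsilon} 1/N^{1+\varepsilon}$, and $\gg 1/(q (\log q)^{1+\varepsilon}) \gg_{\alpha,\varepsilon} 1/(N(\log N)^{1+\varepsilon})$, respectively. A telescoping argument shows that the minimum consecutive gap $\delta^{(\alpha)}_{\min}(N)$ equals $\min_{i \neq j, i,j \leq N} |\lambda_i - \lambda_j|$, so these pointwise bounds transfer directly to the proposition; the finitely many small $N$ outside the Weyl asymptotic regime are absorbed into the implied constant.

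There is no serious obstacle here: the entire content of the proposition is that every eigenvalue gap encodes a rational approximant $p/q$ to $\alpha$ of height $q \ll_\alpha N$, after which the three parts are parallel applications of the three layers of classical Diophantine theory. The only point requiring minor verification is that $q \geq 2$, so that \eqref{23} is applicable in part (iii); this is guaranteed by the elementary inequality $m_1 + m_2 \geq 3$ for distinct positive integers.
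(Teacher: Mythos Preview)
Your proof is correct and follows essentially the same approach as the paper's: write an eigenvalue gap as $|q\alpha - p|$ with $q = |m^2 - m'^2| \ll N$ and feed this into the relevant Diophantine inequality \eqref{21}, \eqref{22}, or \eqref{23}. You add careful detail (the degenerate case $m_1 = m_2$, the check that $q \geq 2$ needed for \eqref{23}, and the passage from consecutive to arbitrary gaps) that the paper's one-line argument leaves implicit.
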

\begin{proof}
Indeed if $\alpha$ is badly approximable  then for any two distinct eigenvalues $\lambda:=\alpha m^2+n^2$ and $\lambda':=\alpha m'^2+n'^2$ with $\max(\lambda, \lambda') \leq N$ we obtain 
$$ 
|\lambda-\lambda'| = |(m^2-m'^2)\alpha - (n'^2-n^2)|\gg \frac 1{|m^2-m'^2|}\gg 
 \frac 1{\max(\lambda,\lambda')} \geq \frac{1}{N}
$$
using \eqref{21}. 
The same argument with \eqref{22} and \eqref{23} in place of \eqref{21} proves ii) and iii). 
\end{proof}

\subsection{A general upper bound} 

\begin{proposition}\label{prop:allalpha} 
For any  irrational  $\alpha > 0$, we have
$$ \delta_{\min}^{(\alpha)}(N)\ll N^{-1/2} $$
for all $N$. 
\end{proposition}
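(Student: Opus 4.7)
The plan is to use Dirichlet's approximation theorem applied to $\alpha$, and to convert a good rational approximation $k/j$ into a pair of nearby eigenvalues via the algebraic identity $(j+1)^2-(j-1)^2=4j$.

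Given $N$, set $M:=\lfloor c(\alpha)\sqrt{N}\rfloor$ for a small constant $c(\alpha)>0$ chosen below. Dirichlet's theorem produces integers $1\le j\le M$ and $k$ with $|j\alpha-k|\le 1/M$. For $N$ sufficiently large in terms of $\alpha$, one may arrange $j,k\ge 2$: the alternative $j=1$ would force $\alpha$ to lie within $1/M$ of an integer, which fails once $M$ exceeds a constant depending on $\alpha$; similarly, restricting to large enough $j$ forces $k\approx j\alpha\ge 2$. Setting
\[
(m,n):=(j+1,\,k-1),\qquad (m',n'):=(j-1,\,k+1),
\]
both in $\Z_{\ge 1}^2$, and letting $\lambda=\alpha m^2+n^2$, $\lambda'=\alpha m'^2+n'^2$ be the associated eigenvalues, a direct expansion gives
\[
\lambda-\lambda' \;=\; \alpha\bigl((j+1)^2-(j-1)^2\bigr)-\bigl((k+1)^2-(k-1)^2\bigr) \;=\; 4\,(j\alpha-k),
\]
whence $|\lambda-\lambda'|\le 4/M\ll N^{-1/2}$.

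It remains to confirm that $\lambda$ and $\lambda'$ are among the first $N$ eigenvalues. The crude bound $\lambda,\lambda'\ll_\alpha M^2$, combined with Weyl's law $\lambda_N\asymp_\alpha N$ recorded in the introduction, shows this once $c(\alpha)$ is taken small enough; the case of bounded $N$ is trivial since then $\delta_{\min}^{(\alpha)}(N)$ is bounded above by a constant. The main obstacle in the argument---and the reason it is not quite a one-line application of Dirichlet---is that not every positive integer is a difference of two squares (those $\equiv 2\pmod 4$ are not), so one cannot in general represent $j$ itself as $m^2-m'^2$. Multiplying $(j\alpha-k)$ through by $4$ and invoking the identity $4j=(j+1)^2-(j-1)^2$ circumvents this obstruction at the cost of only a harmless constant factor in the error.
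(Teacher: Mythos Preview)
Your proof is correct and follows essentially the same approach as the paper's: both use Dirichlet's theorem and the identity $(x+1)^2-(x-1)^2=4x$ to manufacture a small gap. The only difference is cosmetic: the paper takes $m=2q+1$, $m'=2q-1$, $n=2a-1$, $n'=2a+1$, which automatically gives $m,m',n,n'\ge 1$ as soon as $q,a\ge 1$, whereas your choice $m'=j-1$, $n=k-1$ forces you to spend a few extra lines ruling out small $j$ and $k$. Your handling of that point is a touch compressed (``similarly, restricting to large enough $j$'' really means iterating the $j=1$ argument to exclude each $j\le \lceil 2/\alpha\rceil$ in turn), but the logic is sound.
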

\begin{proof} Let $Q \geq 1$ be sufficiently large.  By Dirichlet's approximation theorem there exist integers $a \in \Bbb{Z}$,   $1 \leq q \leq Q$ such that  $0<|a - q\alpha| \leq 1/Q$, and since $\alpha > 0$ we must have $a \geq 1$. With $m = 2q+1$, $m' = 2q - 1$, $n = 2a-1$, $n' = 2a + 1$ we have $1 \leq m, m', n, n' \ll Q$ and 
$$|\alpha m^2 + n^2 - (\alpha {m'}^2 - {n'}^2)| = 8 |\alpha q - a| \leq \frac{8}{Q}.$$
Choosing $Q$ to be of exact order $N^{1/2}$ gives the desired bound. 
\end{proof}

\section{The general strategy}\label{sec: strategy} 
From now on, we deal with getting a bound of the form $\delta_{\min}^{(\alpha)}(N)\ll N^{-1+\varepsilon}$. 
We will frequently use the relation $\lambda_i \asymp i$ for $i \geq 1$. 

We recall the notion of ``evenly divisible'', introduced in Section \ref{proofs}.  
\begin{definition}
We call an integer  $n$ is evenly divisible if there is a divisor $d\mid n$,  such that $\min(d,n/d)\gg n^{1/2-\varepsilon}$ for all $\varepsilon > 0$.   We call $n$ strongly evenly divisible if there is a divisor $d\mid n$, such that $\min(d,n/d)\gg n^{1/2}$.
\end{definition}
So primes are not evenly divisible, but perfect squares are, even   strongly so.  
Suppose we have found a good rational approximation 
\begin{equation}\label{approx}
|\alpha q-p| \ll \frac 1{q}  
\end{equation}
with $p$, $q$ both evenly divisible, say $d\mid q$, $q^{1/2-\varepsilon}\ll d\leq \sqrt{q}$, and $e\mid p$, $p^{1/2-\varepsilon}\ll e\leq \sqrt{p}$ (note that $p\asymp q$ since $p/q $ is an approximation to $\alpha$). It is useful to observe that we may assume without loss of generality that neither $p$ nor $q$ is a perfect square. Indeed, at least one of the pairs $(p, q)$, $(2p, 2q)$, $(3p, 3q)$ contains two non-squares, and so we can simply replace $(p, q)$ with $(2p, 2q)$ or $(3p, 3q)$ in \eqref{approx} if necessary.

Now find $m>m'\geq 1$, $n'>n\geq 1$  solving 
$$ m-m'=2d,\quad m+m' = 2\frac{q}{d},\quad n-n'=2e,\quad n+n' = 2\frac{p}{e}, $$
namely $$  m=\frac {q}{d}+d  , \quad  m'= \frac {q}{d}-d , \quad 
n= \frac{p}e-e,\quad n'= \frac{p}e+e\;.
$$
Notice that all variables are non-zero by our assumption that neither $p$ nor $q$ is a perfect square.  
Clearly  
$$ 
m^2-m'^2  =4q    ,\quad n'^2-n^2 =4 p  
 $$
and moreover  by our assumptions on the size of $d$ and $e$, we have 
$$ q^{1/2} \ll m, n'  \quad \text{and} \quad  m,m',n,n'\ll q^{1/2  +\varepsilon} \;.
$$

Hence the corresponding eigenvalues  
$$ \lambda:=\alpha m^2+n^2, \quad \lambda':=\alpha m'^2+n'^2 $$
 satisfy (maybe with a different value of $\varepsilon$)  
$$ q \ll \lambda,\lambda'\ll q^{1+\varepsilon} 
$$
 and give a gap in the spectrum   of size at most
$$
|\lambda-\lambda'| = |\alpha (m^2-m'^2) - (n'^2-n^2)| =4| \alpha q -p | \ll \frac 1{q} \ll \frac{1}{\max(\lambda, \lambda')^{1-\varepsilon}},
$$
where we used \eqref{approx} in the penultimate step. We conclude
$$
\delta_{\min}^{(\alpha)}(N)\ll \frac 1{N^{1-\varepsilon}}\;
$$
for $N \asymp \max(\lambda, \lambda')$. 
This argument shows  the following:
\begin{lemma}\label{general}
If $\alpha > 0$ has infinitely many good rational approximations $p_n/q_n$ with $q_1 < q_2 < \ldots $ as in \eqref{approx} with 
both $p$ and $q$ evenly divisible (resp.\ strongly evenly divisible),  then $\delta_{\min}^{(\alpha)}(N)\ll N^{-1+\varepsilon}$ for all $\varepsilon > 0$  (resp.\ $\delta_{\min}^{(\alpha)}(N)\ll N^{-1}$) infinitely often.\\
If in addition $q_n \geq c q_{n+1}$, for some constant $c > 0$ (possibly depending on $\alpha$, but not on $n$), then these inequalities hold   for all $N$. 
\end{lemma}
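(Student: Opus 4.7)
The proof is essentially a formalization of the construction sketched immediately before the lemma, so the plan is to carefully execute that construction for each approximation in the hypothesis and then to bootstrap from ``infinitely often'' to ``for all $N$'' using the density assumption.

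For each $n$, I would first reduce to the case that neither $p_n$ nor $q_n$ is a perfect square by replacing the pair, if necessary, with $(2 p_n, 2 q_n)$ or $(3 p_n, 3 q_n)$: at least one of these three scalings produces two non-squares, and the substitution preserves both the approximation quality $|\alpha q - p|\ll 1/q$ (up to constants) and the even (respectively strongly even) divisibility, since multiplying by a bounded constant distorts the divisor balance only by a bounded factor. With balanced divisors $d \mid q_n$ and $e \mid p_n$ in hand, I would form the four positive integers
\[
m = \tfrac{q_n}{d}+d,\quad m' = \tfrac{q_n}{d}-d,\quad n = \tfrac{p_n}{e}-e,\quad n' = \tfrac{p_n}{e}+e,
\]
verify the identities $m^2-(m')^2 = 4q_n$ and $(n')^2 - n^2 = 4p_n$, and record the size estimates $q_n^{1/2-\varepsilon} \ll m,m',n,n' \ll q_n^{1/2+\varepsilon}$ in the evenly divisible case (and cleanly $\asymp q_n^{1/2}$ under strong even divisibility). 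The eigenvalues $\lambda = \alpha m^2+n^2$ and $\lambda' = \alpha (m')^2+(n')^2$ then have comparable sizes $\asymp q_n^{1+o(1)}$, while their difference telescopes via \eqref{approx} to $|\lambda - \lambda'| = 4|\alpha q_n - p_n| \ll 1/q_n \ll \max(\lambda,\lambda')^{-1+\varepsilon}$, so setting $N_n \asymp \max(\lambda,\lambda')$ yields $\delta_{\min}^{(\alpha)}(N_n) \ll N_n^{-1+\varepsilon}$ (respectively $N_n^{-1}$). The ``infinitely often'' assertion then follows because $q_n \to \infty$.

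For the ``for all $N$'' strengthening, the hypothesis $q_n \geq c\, q_{n+1}$ forces $q_{n+1}/q_n$ to be bounded, so the constructed threshold sequence $N_n \asymp q_n^{1+o(1)}$ has bounded consecutive ratios. Given an arbitrary large $N$, I would locate $n$ with $N_n \leq N \leq N_{n+1}$; since $\delta_{\min}^{(\alpha)}(\cdot)$ is non-increasing (a minimum over more gaps can only drop) and $N_n \asymp N$, the bound at $N_n$ transfers to $N$ with only a constant loss. The only mild obstacle is bookkeeping the $q_n^{\varepsilon}$ slack in the evenly divisible case, which propagates into the sizes of $m,m',n,n'$, then of $\lambda, \lambda'$, and thence of $N_n$; this is routine since $\varepsilon$ is arbitrary and can be renamed at the end, and in the strongly evenly divisible case the slack factors disappear entirely, giving the clean bound $\delta_{\min}^{(\alpha)}(N) \ll 1/N$ directly.
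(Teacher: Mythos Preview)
Your argument is correct and is exactly the construction the paper carries out in \S\ref{sec: strategy} immediately before stating the lemma, including the non-square reduction and the monotonicity step for ``all $N$''. One harmless slip: the lower bound $m',\,n \gg q_n^{1/2-\varepsilon}$ need not hold (these differences can be as small as $1$), but since $m = q_n/d + d \geq 2\sqrt{q_n}$ and $n' = p_n/e + e \geq 2\sqrt{p_n}$ you still get $\lambda,\lambda' \gg q_n$, which is all the argument requires.
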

For later purposes we record the following variation. If we replace \eqref{approx} with the weaker condition
\begin{equation}\label{approx1}
\Bigr|\alpha - \frac{p}{q}\Bigl| \ll \frac{1}{T}
\end{equation}
for some $T \leq q^2$, we obtain the following:
\begin{lemma}\label{general1}
If $\alpha > 0$ has infinitely many good rational approximations $p_n/q_n$ with $q_1 < q_2 < \ldots $ as in \eqref{approx1} with both $p$ and $q$ evenly divisible and $q_n \geq c q_{n+1}$ for some constant $c > 0$ and  all $n \geq 1$, then 
 $$\delta_{\min}^{(\alpha)}(N) \ll N^{1+\varepsilon} T^{-1}$$ for all $N$ and all $\varepsilon > 0$. \end{lemma}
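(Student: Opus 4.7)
The plan is to mimic the proof of Lemma~\ref{general} almost verbatim, tracking how the weaker approximation \eqref{approx1} changes the estimate for the spectral gap. From an evenly divisible pair $(p_n, q_n)$ satisfying \eqref{approx1}, I construct $m, m', n, n'$ by exactly the same formulas as before (passing to $(2p_n, 2q_n)$ or $(3p_n, 3q_n)$ if needed so that neither component is a perfect square; this preserves \eqref{approx1}). The algebraic identities $m^2 - m'^2 = 4q_n$, $n'^2 - n^2 = 4 p_n$, and the size bounds $q_n^{1/2} \ll m, m', n, n' \ll q_n^{1/2+\varepsilon}$ remain valid, since their derivation only used the even divisibility of $p_n$ and $q_n$ and not the quality of the approximation.

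The sole place where the new hypothesis enters is the estimate of the resulting gap, which now reads
$$
|\lambda - \lambda'| = \bigl| \alpha(m^2 - m'^2) - (n'^2 - n^2)\bigr| = 4 q_n \Bigl| \alpha - \frac{p_n}{q_n} \Bigr| \ll \frac{q_n}{T},
$$
replacing the earlier bound $\ll 1/q_n$.

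To obtain a bound valid for every $N$, I proceed exactly as at the end of the proof of Lemma~\ref{general}. Given $N$, I need the two constructed eigenvalues $\lambda, \lambda'$ to lie below $\lambda_N \asymp N$. Since $\lambda, \lambda' \ll q_n^{1+\varepsilon}$ for any $\varepsilon > 0$, this is ensured whenever $q_n \ll N^{1/(1+\varepsilon)}$. The bounded-ratio hypothesis $q_n \geq c q_{n+1}$ guarantees that some $q_n$ does lie in an interval of the form $[c' N^{1/(1+\varepsilon)}, N^{1/(1+\varepsilon)}]$, and for this $n$ the gap is at most $q_n/T \ll N^{1/(1+\varepsilon)}/T \leq N^{1+\varepsilon}/T$, which is the claim (in fact slightly stronger).

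There is no real obstacle: the construction and the size estimates are imported unchanged from Lemma~\ref{general}, and only the final step — the bound on $|\alpha q_n - p_n|$ — has to be adjusted to reflect the weaker hypothesis \eqref{approx1}. The mild loss $N^{\varepsilon}$ (as opposed to $N$) in the statement is forced precisely by the $\varepsilon$ appearing in the definition of \emph{evenly divisible}.
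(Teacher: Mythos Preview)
Your proposal is correct and follows precisely the approach the paper intends: the paper does not spell out a separate proof for Lemma~\ref{general1} but simply states that it is obtained from the argument for Lemma~\ref{general} by replacing \eqref{approx} with \eqref{approx1}, and that is exactly what you carry out. One small slip: the lower bound $q_n^{1/2}\ll m,m',n,n'$ is only asserted in the paper for $m$ and $n'$ (indeed $m'$ and $n$ can be small), but this is harmless since $\lambda\geq \alpha m^2$ and $\lambda'\geq n'^2$ already give $\lambda,\lambda'\gg q_n$.
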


\section{Interlude: Strong divisibility sequences and Chebyshev polynomials}

A sequence of integers $\{a_n\}$ is a {\em divisibility sequence} if $m\mid n$ implies that  $a_m\mid a_n$. It is a {\em strong} divisibility sequence if 
$$ \gcd(a_m, a_n) = a_{\gcd(m,n)}.$$
A classical example is the sequence of Fibonacci numbers (see \cite[Section 1.2.8]{Knuth}), and it is known that second order recurrence sequences with constant coefficients of the form 
\begin{equation}\label{strong}
a_{n+1} = b a_n + d a_{n-1}, \quad (b, d) = 1, \quad a_0 = 0, \quad a_1 = 1
\end{equation}
satisfy this property, see e.g.\  \cite[Proposition 2.2]{HS}.

 One can generate families of such sequences with Chebyshev polynomials. We recall that the Chebyshev polynomials of the first and second kind $T_n$ and $U_n$ are defined as (see e.g. \cite{Rivlin})
$$T_n(x) = \frac{1}{2}\left((x + \sqrt{x^2-1})^n + (x - \sqrt{x^2-1})^n\right)$$
and
$$U_n(x) = \frac{ (x + \sqrt{x^2-1})^{n+1} - (x - \sqrt{x^2-1})^{n+1}}{2 \sqrt{x^2 -1}}.$$
They satisfy the second order recurrence relation
\begin{equation}\label{rec}
T_{n+1}(x) = 2x T_{n}(x) - T_{n-1}(x), \quad U_{n+1}(x) = 2x U_{n}(x) - U_{n-1}(x),
\end{equation}
and they are solutions of a polynomial Pell equation
\begin{equation}\label{pell}
T_n(x)^2 - (x^2 - 1)U_{n-1}(x)^2 = 1.
\end{equation}
Also useful is the formula
\begin{equation}\label{formula}
  T_{n+m}(x) = 2 T_n(x) T_m(x) - T_{n-m}(x), \quad n \geq m \geq 0,
\end{equation}
which can be easily verified from the definition of $T_n$. 
One checks   by induction using  \eqref{rec} that
\begin{equation}\label{integral}
  U_n(x/2), \, 2T_n(x/2) \in \Bbb{Z} \quad \text{for } x \in \Bbb{Z}.
\end{equation}
Any half-integral specialization of shifted Chebyshev polynomials of the second kind forms a strong divisibility sequence:
\begin{equation}\label{second}
  (U_{n-1}(x/2), U_{m-1}(x/2)) = U_{(n, m)-1}(x/2)
\end{equation}
for all $n, m, x \in \Bbb{N}$. This follows, for instance, from noting that  the sequence $a_n = U_{n-1}(x/2)$ satisfies \eqref{strong} with $d = -1$, $b = x$, see also  \cite{RTW}. 

A little less known is a slightly weaker corresponding statement for Chebyshev polynomials of the first kind: we have 
\begin{equation}\label{first}
  (2T_n(x/2), 2T_m(x/2)) = 2T_{(n, m)}(x/2)
\end{equation}
for all $x \in \Bbb{N}$ and all \emph{odd} positive integers $n, m$. A variation of this is proved in \cite[Theorem 2]{RTW}, but for convenience we give a proof of this fact: 

Let  $x \in \Bbb{Z}$, and let $a_n = 2T_n(x/2)$.  We write $y = \frac{1}{2}(x + \sqrt{x^2 - 4})$, so that $2T_n(x/2) = y^n + y^{-n}$. Clearly $y$ is a quadratic algebraic integer of norm 1, since it is the root of a monic integral quadratic polynomial. Let $m$ be odd. Then clearly 
$$2T_{nm}(x/2) = 2T_n(x/2) \sum_{j=0}^{m-1} (-1)^j y^{n(2j-(m-1))}, $$
and by basic Galois theory, the second factor is rational and an algebraic integer, hence integral. This shows $a_{n} \mid a_{nm}$ for every odd $m$. Next suppose that $n, m$ are both odd. We know already $a_{(n, m)} \mid (a_n, a_m)$, and we want to show equality here. Write
$$2(n, m) = rn - sm$$ 
with \emph{odd} positive integers $r, s$. Then
$$(a_n, a_m) \mid (a_{rn}, a_{sm}) = (a_{sm+2(n, m)}, a_{sm}).$$
Applying \eqref{formula} recursively with $(sm - 2j(n, m), (n, m))$, $j = 0, 1, \ldots$, in place of $(n, m)$ we see that
$$(a_{sm + 2(n, m)}, a_{sm}) \mid (a_{sm}, a_{sm - 2(n, m)}) \mid \cdots \mid (a_{3(n, m)}, a_{(n, m)}) = a_{(n, m)},$$
as desired.

\section{Rational approximants of $\sqrt{D}$}\label{sec:SDS}

In this section we prove Theorem \ref{thm:main}. Let $\alpha = \sqrt{r} \not\in \Bbb{Q}$, $r \in \Bbb{Q}_{>0}$, be given. By Lemma \ref{general} it suffices to find a sequence $p_n/q_n$, $q_1 < q_2 < \ldots$,  of approximations $|\alpha - p_n/q_n | \ll 1/q_n^{2}$ such that $p_n$ and $q_n$ are simultaneously evenly divisible, and $q_n \gg q_{n+1}$.  To simplify things, we observe   that we can restrict  $r$ to be an integer divisible by 4, say $r = 4D$ with $D \in \Bbb{N}$ not a perfect square, since fixed rational factors can be distributed among the $p_n$ and $q_n$ without changing the notion of evenly divisible, nor the quality of  the approximation, nor the  inequality $q_n \gg q_{n+1}$. 

By the theory of Pell's equation there exists a non-trivial solution $(x, y) \in \Bbb{N} \times \Bbb{N}$ to the diophantine equation
$$x^2 - Dy^2 = 1.$$
Consider the sequences
$$x_n := T_n(x), \quad  y_n := y U_{n-1}(x).$$
By \eqref{pell}, these are also (obviously pairwise different) solutions of the Pell equation, since
$$x_n^2 - Dy_n^2 = T_n(x)^2 - Dy^2 U_{n-1}(x)^2  = 1.$$
Therefore
$$\left|\sqrt{4D} - \frac{2x_n}{y_n} \right| \leq \frac{2}{\sqrt{D} y_n^2} \ll \frac{1}{y_n^2}.$$
It is clear from the definition of the Chebyshev polynomials that
\begin{equation}\label{growth}
\log x_n, \, \log y_n = n \log (x + \sqrt{x^2 - 1}) + O(1)
\end{equation}
for $n \rightarrow \infty. $

Now given $0 < \varepsilon < 1/2$, we can find  distinct odd primes $2<\ell_1<\ldots<\ell_J$ coprime to $y$ so that 
\begin{equation}
\frac 12-\varepsilon <1-\prod_{j=1}^J \left(1-\frac 1{\ell_j}\right) <\frac 12. 
\end{equation}
This is because  $\{ 1/\ell: \ell\;{\rm prime}\}$ is a zero sequence whose sum is divergent (this is a form of the Riemann rearrangement theorem). 
For instance, take 
$ \ell_1 = 3$, $\ell_2 = 5$, $\ell_3 = 17$, $\ell_4 = 257$ with 
$$1-\prod_{j=1}^4 \left(1-\frac 1{\ell_j}\right) \approx 0.499992.$$ 
From now on we consider  indices $n$ of the form 
\begin{equation}\label{n}
n:=\ell_1\cdot \ell_2\cdot \ldots \cdot \ell_J\cdot P, 
\end{equation}
where  $P$ is any odd large positive integer coprime to $\ell_1 \cdot \ldots \cdot \ell_J$ 
(note that such $n$'s  are odd). Put $p_n = 2x_n$, $q_n = y_n$. By \eqref{second} and \eqref{first}, $q_{n/\ell_j}\mid q_n$ and $p_{n/\ell_j}\mid p_n$ for each $j$.    
Therefore setting   
$$
Q:=\lcm (q_{n/\ell_1},\ldots, q_{n/\ell_J}), \quad P:=\lcm (p_{n/\ell_1},\ldots, p_{n/\ell_J}),
$$
we get divisors  $Q\mid q_n$ and $P\mid p_n$.


We want to argue that $Q$ is a divisor of $q_n$ roughly of square root size, so that $q_n$ is evenly  divisible. Precisely, we will show that
\begin{equation}\label{1/2}
 \left(\frac{1}{2} -\varepsilon \right) \log q_n + O(1) \leq \log Q \leq   \left(\frac{1}{2} +\varepsilon \right) \log q_n + O(1).
\end{equation}
To this end we recall the inclusion-exclusion formula for the least common multiple
\begin{displaymath}
\begin{split}
\lcm(a_1, \ldots, a_r)&  =   \prod_{\substack{S \subseteq \{1, \ldots J\}\\ |S| \geq 1}} \gcd(\{a_j \mid j \in S\})^{(-1)^{|S| - 1}},\\
& = \prod_{1 \leq j \leq J}  a_j \prod_{1 \leq i < j \leq J} \gcd(a_i, a_j)^{-1}   \prod_{1 \leq i < j  < k\leq J} \gcd(a_i, a_j, a_k ) \ldots,
\end{split}
\end{displaymath}
so that by \eqref{second} we obtain
\begin{displaymath}
\begin{split}
  \log Q & = \sum_{1 \leq j \leq J} \log q_{n/\ell_j} -   \sum_{1 \leq i < j \leq J} \log q_{\gcd(n/\ell_i, n/\ell_j)}  + \ldots\\
  & = \sum_{1 \leq j \leq J} \log q_{n/\ell_j} -   \sum_{1 \leq i < j \leq J} \log q_{ n/(\ell_i \ell_j)}  + \ldots,
  \end{split}
\end{displaymath}
where in the second step we used that $\ell_1,    \ldots , \ell_J$ are pairwise coprime divisors of $n$   and hence $$\gcd(\{n/\ell_j \mid j \in S\}) = \frac{n}{\prod_{j\in S} \ell_j }.$$
 By \eqref{growth} this  equals 
\begin{displaymath}
\begin{split}
&\log \left(x + \sqrt{x^2 - 1}\right) \Bigl( \sum_{1 \leq j \leq J}\frac{n}{\ell_j} - \sum_{ 1 \leq i < j \leq J}  \frac{n}{\ell_i \ell_j} +\ldots   \Bigr) + O(1)\\
= &  \log \left(x + \sqrt{x^2 - 1}\right)  n \Biggl( 1-\prod_{j=1}^J \Bigl(1-\frac 1{\ell_j}\Bigr) \Biggr) + O(1)\\
= &\log q_n \Biggl( 1-\prod_{j=1}^J \Bigl(1-\frac 1{\ell_j}\Bigr) \Biggr) + O(1),
\end{split}
\end{displaymath}
which gives \eqref{1/2}.

Likewise, $\log P \sim\frac{1}{2} \log p_n$, so that $p_n$ is evenly divisible. 

Finally we observe that the  admissible indices $n$ as in \eqref{n} is an integer sequence with bounded gaps (e.g.\ by $(\ell_1 \cdot \ldots \cdot \ell_J)^2$), and it follows directly from the definition of $q_n = y U_{n-1}(x)$ that  $q_n \gg q_{n+1}$. This completes the proof of Theorem \ref{thm:main}.  \hfill $\square$

\section{Some other quadratic irrationalities}\label{other}
We can leverage our results about irrationalities of the form $\sqrt{D}$ to obtain the same result on $\delta_{\min}^{(\alpha)}(N)$ for other quadratic irrationalities.  
\begin{theorem}\label{moregeneral}
For all positive  real quadratic irrationalities of the form
\begin{equation}\label{alpha}
\alpha = \alpha(x;a,b,\varepsilon,r) =r\cdot  \left(\frac{x+\sqrt{x^2 + 4\varepsilon}}{2}\right)^a\cdot \left(\sqrt{x^2+4\varepsilon} \right)^b
\end{equation}
with 
$$a\in \Z, \quad b=0,1,\quad  x\in \Z\backslash \{0\}, \quad \varepsilon =\pm 1, \quad r\in \Q^\times,
$$
we have $\delta^{(\alpha)}_{\min}(N) \ll N^{- 1 + \varepsilon}$ infinitely often.
\end{theorem}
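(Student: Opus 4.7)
The plan is to apply Lemma \ref{general} by exhibiting, for each $\alpha$ of the form \eqref{alpha}, an infinite sequence of rational approximants $p_n/q_n$ satisfying $|\alpha q_n - p_n| \ll 1/q_n$ in which both $p_n$ and $q_n$ are evenly divisible. Write $D = x^2 + 4\varepsilon$ (not a perfect square, since $\alpha$ is irrational), and let $y = (x + \sqrt D)/2$, $\bar y = (x - \sqrt D)/2$ be the roots of $z^2 - xz - \varepsilon = 0$, so that $y - \bar y = \sqrt D$ and $y\bar y = -\varepsilon$; we may assume $|y| > 1 > |\bar y|$. Introduce the associated Lucas sequences
\begin{equation*}
u_n := \frac{y^n - \bar y^n}{y - \bar y}, \qquad v_n := y^n + \bar y^n,
\end{equation*}
both integer sequences satisfying $z_{n+1} = x z_n + \varepsilon z_{n-1}$. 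Because $\gcd(x,\varepsilon) = 1$, the sequence $\{u_n\}$ is a strong divisibility sequence; and by the same Galois-theoretic argument used in Section 4 for Chebyshev polynomials of the first kind, $\{v_n\}$ is a strong divisibility sequence when restricted to odd indices.

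A short manipulation using the identities $2y^a = v_a + u_a \sqrt D$, $v_n - u_n \sqrt D = 2\bar y^n$, and $2 u_{n+a} = u_n v_a + v_n u_a$ yields
\begin{equation*}
y^a u_n - u_{n+a} = -u_a \bar y^n,
\end{equation*}
so $|y^a u_n - u_{n+a}| \ll |y|^{-n} \ll 1/u_n$. Similarly, starting from $y\sqrt D = y^2 + \varepsilon$, hence $y^a \sqrt D = y^{a+1} + \varepsilon y^{a-1}$, and using $v_m = u_{m+1} + \varepsilon u_{m-1}$, the analogous computation gives $y^a \sqrt D \cdot u_n - v_{n+a} = -v_a \bar y^n$, hence $|y^a\sqrt D \cdot u_n - v_{n+a}| \ll 1/u_n$. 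Writing $r = r_1/r_2$ in lowest terms, we are therefore led to the approximants
\begin{equation*}
q_n := |r_2|\, u_n, \qquad p_n := \begin{cases} |r_1|\, u_{n+a}, & b = 0,\\ |r_1|\, v_{n+a}, & b = 1,\end{cases}
\end{equation*}
(with signs arranged so that $p_n > 0$), which satisfy $|\alpha q_n - p_n| \ll 1/q_n$.

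It remains to ensure that, for infinitely many $n$, both $p_n$ and $q_n$ are evenly divisible. Fix a small $\eta > 0$ and, imitating Section \ref{sec:SDS}, choose two \emph{disjoint} finite sets $S_1, S_2$ of odd primes, each avoiding any prime divisor of $x r_1 r_2$, such that $1 - \prod_{\ell \in S_i}(1 - 1/\ell) \in (1/2 - \eta,\, 1/2)$ for $i = 1, 2$; this is possible by divergence of $\sum 1/\ell$. By the Chinese Remainder Theorem there is an infinite arithmetic progression of $n$ satisfying $n \equiv 0 \pmod \ell$ for every $\ell \in S_1$, $n \equiv -a \pmod \ell$ for every $\ell \in S_2$, and (only when $b = 1$) an additional parity condition forcing $n + a$ to be odd. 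For each such $n$, strong divisibility of $u_\bullet$ gives $u_{n/\ell} \mid u_n$ for all $\ell \in S_1$, and the inclusion--exclusion / LCM computation of Section \ref{sec:SDS} produces a divisor of $u_n$ of size $u_n^{1/2 \pm \eta}$; hence $q_n$ is evenly divisible. When $b = 0$, the identical argument applied to $u_{n+a}$ with $S_2$ in place of $S_1$ shows that $p_n$ is evenly divisible. When $b = 1$, each $\ell \in S_2$ is an odd prime dividing the odd integer $n + a$, so $(n+a)/\ell$ is odd and $v_{(n+a)/\ell} \mid v_{n+a}$; the LCM computation then carries over verbatim using the strong divisibility of $v_\bullet$ on odd indices. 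Since $\eta > 0$ was arbitrary, Lemma \ref{general} yields $\delta_{\min}^{(\alpha)}(N) \ll N^{-1 + \eta}$ infinitely often.

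The crucial feature making this work is that in both cases $p_n$ and $q_n$ are \emph{single} terms of Lucas sequences (up to fixed rational factors), not $\Z$-linear combinations, so that the LCM argument of Section \ref{sec:SDS} applies to each of them separately. The main technical obstacle is the case $b = 1$, where strong divisibility of $v_\bullet$ holds only on odd indices; one must add an extra parity constraint in the CRT step and verify its compatibility with the other congruences, and then check that the LCM calculation of Section \ref{sec:SDS} transfers word-for-word from the $u$-sequence to the $v$-sequence on odd indices.
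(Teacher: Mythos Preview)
Your proof is correct and follows essentially the same approach as the paper's own argument. The only differences are cosmetic: you work directly with the Lucas pair $(u_n,v_n)$ attached to $z^2 - xz - \varepsilon$, which lets you treat $\varepsilon = \pm 1$ uniformly, whereas the paper phrases everything via Chebyshev polynomials and splits into the four cases \eqref{cases}; and you obtain the admissible indices by a direct CRT argument, while the paper enforces disjointness of the two prime sets via residue classes $\bmod\,4$ and then solves an explicit linear Diophantine equation in $(m,m')$. One small point worth tightening is the appeal to ``the same Galois-theoretic argument'' for strong divisibility of $v_n$ on odd indices when $\varepsilon = +1$: here $y\bar y = -1$ rather than $+1$, so the relevant identity becomes $v_{n+m} = v_n v_m - (-\varepsilon)^m v_{n-m}$, and one must check that the cofactor in $v_n \mid v_{nm}$ is Galois-invariant under $y \mapsto -1/y$ (it is, since all exponents $n(2j-(m-1))$ are even for $n$ odd, $m$ odd). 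With that verified, the recursive gcd chain from Section~4 goes through unchanged.
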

In order to have $\alpha \in \Bbb{R} \setminus \Bbb{Q}$ we need $(a, b) \not= 0$,   and in addition $x \not\in \{0, \pm 1, \pm 2\}$ if $\varepsilon = -1$.  We can also assume $x > 0$, since $\alpha(-x, a, b, \varepsilon, r) = \alpha(x, -a, b, \varepsilon, (-1)^{\varepsilon a} r)$.

Note that we can display any irrationality of the form $\sqrt{D}$, with integral $D>1$ not a perfect square, as such $\alpha$: Indeed, let $z^2-Dw^2=1$ be a nontrivial solution to the corresponding Pell equation. Choosing $$r=1/(2w), \quad a=0, \quad b=1, \quad x=2z, \quad \varepsilon = -1$$ gives $\sqrt{D} = \alpha(2z,0, 1,-1,1/(2w))$. In particular, Theorem \ref{thm:main} is a special case of Theorem \ref{moregeneral}.  The golden ratio $(1+\sqrt{5})/2$ is obtained by taking $r=1$, $a=1$, $b=0$, $x=1$, $\varepsilon=1$. There are many other examples, but we do not know how to cover all quadratic irrationalities.

\begin{proof} Good rational approximants for $\alpha$ are obtained from the relations
\begin{equation}\label{cases}
\begin{split}
\alpha\Big(x,a,0,-1,\frac cd\Big) & = \frac{c U_{n+a}(x/2)}{dU_n(x/2)} + O\Big( \frac 1{U_n(x/2)^2}\Big),\\
\alpha\Big(x,a,0,+1,\frac cd\Big) &= \frac{c i^{-n-a}U_{n+a}(ix/2)}{d i^{-n}U_n(ix/2)} + O\Big( \frac 1{  |U_n(ix/2)|^2} \Big),\\
\alpha\Big(x,a,1,-1,\frac cd\Big) &= \frac{c 2 T_{n+a}( x/2)}{dU_{n-1}(x/2)} + O\Big( \frac 1{U_{n-1}(x/2)^2} \Big),\\
\alpha\Big(x,a,1,+1,\frac cd\Big)& = \frac{c 2i^{-n-a} T_{n+a}(ix/2)}{di^{1-n}U_{n-1}(ix/2)} + O\Big( \frac 1{|U_{n-1}(ix/2)|^2}\Big),
\end{split}
\end{equation}
which follows immediately from the definition of the Chebyshev polynomials. By the above remarks, 
this covers all $\alpha$ considered in Theorem \ref{other}. Notice that numerators and denominators are integers in each case. We now proceed similarly as in the previous section. We choose odd primes
$2<\ell_1<\ldots<\ell_r$, with $\ell_i=1\mod 4$,  so that
\begin{displaymath}
\frac 12-\varepsilon <1-\prod_{j=1}^r \left(1-\frac 1{\ell_j}\right) <\frac 12. 
\end{displaymath}
and we choose another set of distinct odd primes 
$2<\ell'_1<\ldots<\ell'_s$, with $\ell_j'=3\mod 4 $,   so that 
\begin{displaymath}
\frac 12-\varepsilon <1-\prod_{j=1}^s \left(1-\frac 1{\ell'_j}\right) <\frac 12. 
\end{displaymath}
Put
$$L = \prod_{j=1}^r \ell_j, \quad L' = \prod_{j=1}^r \ell_j'.$$
By construction $(L, L') = 1$. 
We put $n+a = Lm$; moreover, in the   first two cases of \eqref{cases} we put $n+1 = L'm'$, in the last two cases of \eqref{cases} we put $n = L'm'$ with $Lm$ odd and $(L, m) = (L', m') = 1$. Then by the argument of the previous section,  numerators and denominators of the approximations are evenly divisible. It remains to show that we can pick infinitely such pairs $(m, m')$. To this end we put $$m' = \mu'L' + 1, \quad m = 2\mu L + 1,$$  
so that $(L, m) = (L', m') = 1$ and $mL$ is odd, and the linear diophantine equation
$$Lm - L'm' = 2L^2 \mu  - \mu' (L')^2  + (L - L') = b$$
has, for any $b\in \Bbb{Z}$, infinitely many pairs of solutions $(\mu, \mu')$, since $(2L^2, (L')^2) = 1$. \\

In certain cases we can do a little better, and we conclude this section with a proof of \eqref{exact} for all $\alpha(x, a, 0, \pm 1, c/d)$ with $a$ even.  In this case we are dealing exclusively with Chebyshev  polynomials of the second kind, for which slightly better divisibility conditions hold. In particular, restricting the first two cases of \eqref{cases} to odd $n$ and assuming that $a$ is even, the indices in numerator and denominator are odd, and it follows from \eqref{second} that
$$U_{(n+a-1)/2}(x/2) \mid U_{n+a}(x/2), \quad U_{(n-1)/2}(x/2) \mid U_{n}(x/2),$$
so that every second approximant of $\alpha$ has numerator and denominator that are strongly evenly divisible.  \end{proof}

\section{Almost all $\alpha$, lower bound: Proof of Theorem~\ref{thm:mult table}  }

Without loss of generality we will prove Theorem~\ref{thm:mult table}  for almost all $\alpha \in [1, 2]$. Of course the same argument works for any other interval. For $N, X \geq 1$ and $q \in \Bbb{N}$ let
$$S_{X, N} = \{\alpha \in [1, 2] \mid \delta_{\min}^{(\alpha)}(N) \leq 1/X\}$$
and
$$S_X^{(q)} = \{ \alpha \in [1, 2] \mid \| \alpha q \| \leq 1/X\},$$
where as usual $\| . \|$ denotes the distance to the nearest integer. Clearly
$$\mu\big(S_X^{(q)}\big) = \frac{2}{X}.$$
Then $\alpha \in S_{X, N}$ implies that there exist $m, m', n, n' \ll N^{1/2}$ such that 
$$|\alpha(m^2 - m'^2) - (n'^2 - n^2)| \leq 1/X$$
and in particular there exist $u = m - m'$,  $v = m + m'$ with $u, v  \ll N^{1/2}$ such that $$\alpha  \in S_X^{(uv)}.$$
We conclude that
$$S_{X, N} \subseteq \bigcup_{u, v \leq C N^{1/2}} S_X^{(uv)}$$
for a suitable constant $C > 0$ (depending on $\alpha$). Note that the sets $S_X^{(uv)}$ are indexed by the integers which are products $u\cdot v$ with $u,v\leq CN^{1/2}$. These are just the distinct elements in a multiplication table of side length $CN^{1/2}$. 
 Erd\"os showed \cite{Er1} that a multiplication table of side length $X$ contains $o(X^2)$ different  entries. We now invoke Ford's  quantitative  version \cite[Corollary 3]{Fo}, which shows that the union is over $\ll N (\log N)^{-c}(\log\log N)^{-2/3}$ pairs with $c = 1 - \frac{\log (e \log 2)}{\log 2} = 0.086\ldots$.  
We obtain
$$\mu(S_{X, N}) \ll \frac{N}{(\log N)^{c} (\log\log N)^{2/3}} \cdot \frac{1}{X}.$$

Now let
$$S := \big\{\alpha \in [1, 2] \mid \delta_{\min}^{(\alpha)}(N) \leq (\log N)^{c}/N \text{ for all sufficiently  large } N \big\}.$$
Then
$$S = \liminf_{N \rightarrow \infty} S_{ N/(\log N)^{c}, N},$$
and since 
$$\mu(S_{N/(\log N)^{c}, N}) \ll \frac{1}{(\log\log N)^{2/3}} \rightarrow 0,$$
it is clear that $\mu(S) = 0$.

\section{Almost all $\alpha$: bounds for all $N$}
In this section we prove Theorem~\ref{thm:zeta}   for almost all $\alpha \in \mathcal{J}$ (without loss of generality), where   $\mathcal{J} \subseteq (0, \infty)$ is some fixed compact interval. In the following, all implied constants may depend on $\mathcal{J}$. 

 For $\alpha \in \mathcal{J}$,   real $M \geq 1$  and  $M^3 \leq T \leq M^4$,  let 
\begin{equation}\label{S}
\mathcal{S}(M, T, \alpha) := \#\Bigl\{ n_1, n_2, n_3, n_4  \asymp M : \Bigl|\frac{n_1n_2}{n_3n_4}  - \alpha \Bigr| \ll \frac{1}{T} \Bigr\}.
\end{equation}
We are interested in a lower bound for this quantity for almost all $\alpha$  and $T$ as large as possible in terms of $M$. We will  prove the following
\begin{proposition}\label{prop9.1}
For any $\eta > 0$ sufficiently small, we have
$S(M, M^{4 - \eta}, \alpha) \geq 1$ for all sufficiently large $M \geq M_0(\alpha)$, and all 
$\alpha \in \mathcal{J} \setminus \mathcal{T}_M$, 
where $\mathcal{T}_M$ is an exceptional set of measure 
$\mu(\mathcal{T}_M) \ll M^{-\rho}$ with $\rho > 0$ depending only on $\eta > 0$.
%
\end{proposition}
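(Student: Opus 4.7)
I will prove Proposition~\ref{prop9.1} via a Chebyshev second-moment argument. Smooth the defining condition of $\mathcal{S}(M,T,\alpha)$ by a fixed nonnegative bump $\Phi\geq \mathbf{1}_{[-1,1]}$ with $\hat\Phi$ compactly supported, and work with $\tilde{\mathcal{S}}(M,T,\alpha) = \sum_{n_i\asymp M} \Phi\bigl(T(n_1 n_2/(n_3 n_4) - \alpha)\bigr)$, which upper-bounds $\mathcal{S}$ up to a harmless constant. Interchanging sum and integral shows the first moment $\int_\mathcal{J} \tilde\mathcal{S}\, d\alpha$ is of order $M^4/T=M^\eta$, comfortably large when $T=M^{4-\eta}$. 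My goal is an asymptotic for $\int_\mathcal{J} \tilde\mathcal{S}^2\, d\alpha$ that agrees with $|\mathcal{J}|^{-1}\bigl(\int_\mathcal{J}\tilde\mathcal{S}\, d\alpha\bigr)^2$ up to a relative error $O(M^{-\rho})$; Chebyshev's inequality then furnishes an exceptional set $\mathcal{T}_M$ of measure $\ll M^{-\rho}$ outside of which $\tilde\mathcal{S}\gg M^\eta\gg 1$, and hence $\mathcal{S}\geq 1$.

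Expanding the square and performing the $\alpha$-integration, $\int_\mathcal{J} \tilde{\mathcal{S}}^2\, d\alpha$ is essentially $T^{-1}$ times a smoothly weighted count of $8$-tuples $(n_i,m_i)$ with $|\Delta|\ll M^4/T$, where $\Delta=n_1 n_2 m_3 m_4 - m_1 m_2 n_3 n_4$. The diagonal $\Delta=0$ contributes $\ll M^4(\log M)^{O(1)}/T = M^\eta(\log M)^{O(1)}$ by standard mean-square estimates for $d_4$ restricted to factorizations with each factor $\asymp M$, which is negligible compared to the expected size $M^{2\eta}$. For the off-diagonal, the key observation is that $\Delta = A - B$ with $A=n_1 n_2 m_3 m_4$ and $B = m_1 m_2 n_3 n_4$ depending on disjoint sets of four variables each. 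Writing the smooth indicator of $|\Delta|\leq H:=M^4/T$ via its Fourier transform reduces the problem to an $L^2$-estimate $\int \hat\Phi(\xi) |G(\xi/H)|^2\, d\xi$, where $G(\theta) = \sum_{a_j\asymp M} e(\theta a_1 a_2 a_3 a_4)$ is a four-variable trigonometric polynomial. The contribution of $\theta\approx 0$ recovers precisely the expected squared main term, and the remaining task is to bound the contribution of the complementary range $|\theta|\in[M^{-4}, M^{-\eta}]$ with a saving of $M^{-\rho}$.

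Via Mellin inversion applied to each of the four variables in $G$, this $L^2$ quantity is governed by an eighth moment of the Dirichlet polynomial $Z_M(s)=\sum_{n\asymp M} n^{-s}$ along a vertical segment close to $\Re(s)=1$, so $Z_M(s)$ plays the role of a windowed $\zeta(s)$. The Lindel\"of hypothesis would suffice but is unavailable; instead I would introduce an artificial bilinear factorization by writing each variable as $n=ab$ with $a$ in a short range of length $\asymp M^\delta$ and $b$ in a long range, so that $Z_M(s)=A(s)B(s)$ splits as a short Dirichlet polynomial times a long one. On the short factor I appeal to Vinogradov-style bounds for $\zeta(\sigma+it)$ near $\sigma=1$, which give pointwise savings uniform in $t$ beyond the convexity bound; on the long factor I apply the Montgomery--Vaughan mean value theorem for Dirichlet polynomials, extracting savings on average in $t$. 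Combining these via Cauchy--Schwarz inside the eighth-moment integral yields $\int_\mathcal{J}\tilde\mathcal{S}^2\, d\alpha \leq |\mathcal{J}|^{-1}\bigl(\int_\mathcal{J}\tilde\mathcal{S}\, d\alpha\bigr)^2 (1 + O(M^{-\rho}))$ with an explicit $\rho=\rho(\eta,\delta)>0$, and Chebyshev completes the proof. The main obstacle is precisely this eighth-moment estimate: beating Lindel\"of is not automatic, and the split parameter $\delta$ must be calibrated carefully so that the pointwise Vinogradov savings on the short factor together with the on-average mean-value gains on the long factor outweigh the length of the $t$-segment over which the integral extends.
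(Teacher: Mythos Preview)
Your approach is essentially the paper's: a second-moment/Chebyshev argument in $\alpha$ (the paper works in $\beta=\log\alpha$, which is equivalent), Fourier/Mellin analysis isolating a main term and reducing the error to an eighth moment of a zeta-like Dirichlet polynomial, then the artificial bilinear split into short and long factors handled respectively by Vinogradov-type pointwise bounds for $\zeta$ near $\Re s=1$ and the standard mean-value theorem. The only point of care is that $Z_M(s)=\sum_{n\asymp M}n^{-s}$ does not literally factor as $A(s)B(s)$ (a prime $n\asymp M$ has no splitting $n=ab$ with $a\asymp M^\delta$), so the bilinear structure must be inserted at the level of the count itself---replacing each variable $n_j\asymp M$ by a pair $(a_j,b_j)$ with $a_j\asymp M^{\delta}$, $b_j\asymp M^{1-\delta}$, which changes the count by at most $M^\varepsilon$ via the divisor bound---\emph{before} the Fourier analysis, exactly as the paper does.
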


Taking Proposition \ref{prop9.1} for granted, we specialize  $M = 2^{\nu}$, $\nu \in \Bbb{N}$,  so that 
$$\sum_{M= 2^{\nu}} \mu(\mathcal{T}_M) < \infty.$$
By the Borel-Cantelli lemma we conclude $S(M, M^{4 - \eta}, \alpha) \geq 1$ for almost all $\alpha$, all sufficiently large $M = 2^{\nu} \geq M_0(\alpha)$ and $\eta$ as in Proposition \ref{prop9.1}.  It follows from Lemma \ref{general1} that 
\begin{equation}\label{delta}
\delta^{(\alpha)}_{\min}(N) \ll N^{1-\frac{4 - \eta}{2}+\varepsilon}  = N^{-1 + \eta/2 + \varepsilon}
\end{equation}
for \emph{all} sufficiently large integers $N \geq N_0(\alpha)$. Since we allow the implied constant to depend on $\alpha$, \eqref{delta} holds in fact for all $N$, and the 
  bound   \eqref{conditional} in Theorem~\ref{thm:zeta} follows.
  
  The remainder of this section is devoted to the proof of Proposition \ref{prop9.1}. 
To  prepare for the upcoming Fourier analysis,  let $w_1, w_2$ be two non-negative smooth functions bounded by 1. We assume that   $w_1$ takes the value 1 on some sufficiently large interval $[a_1, b_1]$ with constants $0 < a_1 < b_1$ depending on $\mathcal{J}$  
and the value 0 outside $[\frac{1}{2} a_1, 2b_1]$, and that   $w_2$ takes the value 1 on   $[-1, 1]$   and the value 0 outside $[-2, 2]$.  Note  that the Fourier transform $\widehat{w}_2(y) := \int_{-\infty}^{\infty} w_2(x) e^{-2\pi i x y}  dx\geq 0$ is rapidly decreasing.  
 
Fix some small $\eta > 0$, and let as usual $\varepsilon > 0$ denote an arbitrarily small constant, not necessarily the same at each occurrence.  The first key observation is that by the standard divisor bound we have  
$$
S(M, T, \alpha) \gg M^{-\varepsilon} \widetilde{S}(M, T, \alpha), 
$$
where
$$
\widetilde{S}(M, T, \alpha) := \# \Big \{ n_i \asymp M^{\eta / 4} , m_i \asymp M^{1 - \eta/4} : \Big | \frac{n_1 m_1 n_2 m_2}{n_3 m_3 n_4 m_4} - \alpha \Big | \ll \frac{1}{T} \Big \} \;.
$$
Denoting $\beta = \log \alpha$, we see that $\widetilde{S}(M, T, \alpha)$ is bounded from below by 
\begin{displaymath}
\begin{split}
&\sum_{\substack{n_1, n_2, n_3, n_4 \\ m_1, m_2, m_3, m_4}} w_2 \Big ( T \Big ( \log \frac{n_1 m_1 n_2 m_2}{n_3 m_3 n_4 m_4} - \beta \Big ) \Big ) \prod_{j = 1}^{4} w_1 \Big ( \frac{n_j}{M^{\eta/4}} \Big ) w_1 \Big ( \frac{m_j}{M^{1 - \eta/4}} \Big )\\
= & \frac{1}{T} \int_{-\infty}^{\infty} \widehat{w_2}(y / T) e^{-2\pi i y \beta} \Big | \sum_{n} w_1 \Big ( \frac{n}{M^{\eta/4}} \Big ) n^{2\pi i y} \Big |^4   \Big | \sum_{m} w_1 \Big ( \frac{m}{M^{1 - \eta/4}} \Big ) m^{2\pi i y} \Big |^4 dy 
\\
=: & I_1(\beta) + I_2(\beta)
\end{split}
\end{displaymath}
say, where $I_1(\beta)$ is the integral restricted to $|y| \leq M^{\varepsilon}$ for some very small fixed $\varepsilon > 0$, and $I_2(\beta)$ is the rest. 

Let 
$$\check{w}_1(s) := \int_{0}^{\infty} w_1(x) x^{s}  \frac{dx}{x}$$
denote the Mellin transform of $w_1$. Then  
\begin{equation}\label{mellin}
\Sigma(N, y) := \sum_n w_1 \left(\frac{n}{N}\right) n^{2\pi iy} = \int_{(2)} \check{w}_1(s) N^s \zeta(s - 2\pi i y) \frac{ds}{2\pi i}.
\end{equation}
To analyze $I_1(\beta)$ we  shift the contour in \eqref{mellin} to $\Re s = 0$, say, and using the rapid decay of $\check{w}_1$ along vertical lines, we see that for $|y| \leq M^{\varepsilon}$ and $N=M^c$ ($c=\eta/4$ or $1-\eta/4$) we have 
\begin{equation}\label{mellin1}
\Sigma(N, y) = \check{w}_1(1 + 2 \pi i y) N^{1 + 2\pi i y} + O(N^{\varepsilon}).
\end{equation}
From \eqref{mellin1} we conclude (using also Taylor's theorem in the second step)
\begin{equation*}
\begin{split}
I_1(\beta) & = \frac{M^4}{T} \int_{- M^{\varepsilon}}^{ M^{\varepsilon}}\widehat{w}_2\left(\frac yT\right) e^{-2\pi i y\beta} | \check{w}_1(1 + 2 \pi i y) |^8 dy  + O \left(\frac{M^{4-\frac \eta 4+\varepsilon} }{T}\right)\\
& =c(\beta)  \frac{M^4}{T}   + O \left(\frac{M^{4-\frac \eta 4 +\varepsilon} }{T} + \frac{M^4}{T^2} \right),
\end{split}
\end{equation*}
where 
$$c(\beta) =\widehat w_2(0) \int_{- \infty}^{\infty}  e^{-2 \pi i y\beta }  | \check{w}_1(1 + 2 \pi i y) |^8 dy.$$
If we define $v(t) := w_1(e^t) e^t$, again a non-negative compactly supported function, then $\check{w}_1(1+ 2 \pi i y) = \widehat{v}(- y)$, so that
$$c(\beta) = \widehat w_2(0) \int_{\Bbb{R}^7} v(t_1) v(t_2)\dots  v(t_7) v ( -\beta + t_1 + t_2+t_3+t_4 - t_5-t_6-t_7) dt_1\cdots dt_7.$$
If the support of $w_1$ is sufficiently large, then $c(\beta)$  is bounded away from 0, uniformly  for all $e^{\beta} \in \mathcal{J}$, so that
$$I_1(\beta) \gg \frac{M^4}{T}$$
uniformly in $\beta$.

It remains to show that for almost all $\beta$ the contribution $I_2(\beta)$ of the large frequencies $|y| > M^{\varepsilon}$ is of lower order of magnitude. Let 
$$\mathcal{I}  := \Bigl(\int_{\log \mathcal{J}} |I_2(\beta)|^2 d\beta\Bigr)^{1/2}.$$
Suppose we can show 
\begin{equation}\label{suppose}
\mathcal{I} \ll M^{4-\rho} T^{-1}
\end{equation}
 for $T = M^{4-\eta}$ and $\rho  > 0$ possibly depending on $\eta$.   Then we conclude $\mathcal{I}_2(\beta) \ll M^{4-\rho/2} T^{-1}$ for all $\beta$ except for a small set $\mathcal{T}_M$ of measure $\ll M^{-\rho}$, so that for all $\alpha \in \mathcal{J} \setminus \mathcal{T}_M$, 
\begin{align*}
S(M, M^{4-\eta}, \alpha) & \gg M^{-\varepsilon} \cdot \widetilde{S}(M, M^{4-\eta}, \alpha) \\ & \gg 
M^{-\varepsilon} \cdot \Big ( \frac{M^4}{T} + O \Big ( \frac{M^{4 - \rho}}{T} \Big ) \Big )  \gg \frac{M^{4 - \varepsilon}}{T} \geq 1
\end{align*}
  and the proof of Proposition \ref{prop9.1} is complete.

To bound $\mathcal{I} $, we note that $I_2(\beta)=\widehat F(\beta)$ is the Fourier transform of 
$$ F(y):=\mathbf 1(|y|>M^\varepsilon)\,\frac{1}{T}  \widehat w_2\left(\frac yT\right) \big|\Sigma(M^{\frac \eta 4}, y )\Sigma(M^{1-\frac \eta 4},y ) \big|^4.$$   
Therefore
$$
\mathcal{I}^2 \leq \int_{-\infty}^\infty |I_2(\beta)|^2d\beta  = \int_{-\infty}^\infty  |F(y)|^2dy
$$
by Plancherel, that is 
\begin{equation} \label{main}
\mathcal{I}^2 
\ll \frac{1}{T^2} \int_{|y| > M^{\varepsilon}} \Big|\widehat{w_2}  \left( \frac{y}{T} \right) \Big |^2 
\Big | \Sigma(M^{\frac \eta 4},y) \Big|^8    \Big|\Sigma(M^{1-\frac \eta 4},y) \Big|^8
dy. 
\end{equation}
 
Since $|y| \geq M^{\varepsilon}$, we may bound $\Sigma(M^{\frac \eta 4},y)$ by shifting the contour in \eqref{mellin} 
to $\Re s = 1-\eta^4$; now the pole at $s = 1 + 2\pi i y$ is negligible due to the rapid decay of $\check{w}_1$, and hence we obtain the upper bound
 $$
  \Sigma(M^{\frac \eta 4},y)  
\ll M^{(1 - \eta^4) \eta/4} \int_{-\infty}^{\infty} \frac{|\zeta(1 - \eta^4 - 2\pi i y + it)|}{1 + |t|^{10}} dt.
$$
The crucial input is now a  bound of the type 
$$|\zeta(\sigma + it)| \ll   |t|^{A (1 - \sigma)^{3/2} + \varepsilon}, \quad 1/2 \leq \sigma \leq 1, \, |t| \geq 2$$
where both $A$ and the implied constant are absolute. A first result of this type was proved by Richert \cite{Ri} with $A = 100$; recently Heath-Brown \cite{HB} obtained $A = 1/2$.  
We conclude that 
$$
 \Big| \Sigma(M^{\frac \eta 4},y) \Big| \ll M^{(1 - \eta^4) \eta / 4} \cdot |y|^{A \eta^{6} + \varepsilon},
$$
so that   
   \eqref{main}  is  bounded by 
\begin{align*}
& \ll M^{2 (1 - \eta^4) \eta} \cdot \frac{1}{T^2} \int_{\mathbb{R}}\Big|\widehat{w_2} \Big ( \frac{y}{T} \Big )\Big|^2 |y|^{8A \eta^6+\varepsilon}  \Big|\Sigma(M^{1-\frac \eta 4},y) \Big|^8
dy 
\\
&\ll  M^{2 (1 - \eta^4) \eta} \cdot \frac{1}{T^2} \cdot T^{8A \eta^6+\varepsilon} \cdot \int_{|y|\leq T^{1+\varepsilon}} \Big| \sum_{n\ll M^{4-\eta}} a(n) n^{2\pi i y} \Big|^2 dy,
\end{align*}
where 
$$
a(n) = \sum_{n_1\cdot \ldots \cdot n_4=n} w_1\Big(\frac {n_1}{M^{1-\frac \eta 4}}\Big) \cdot  \ldots \cdot w_1\Big(\frac {n_4}{M^{1-\frac \eta 4}}\Big) \ll n^{\varepsilon} \;.
$$
Using the standard mean value theorem \cite[Theorem 9.1]{IK} 
$$\int_0^X \Bigl| \sum_{n \leq N} a_n n^{it} \Bigr|^2 dt \ll (X + N) \sum_{n \leq N} |a_n|^2,$$
we obtain for $T = M^{4-\eta}$ that 
\begin{align*}
\mathcal{I}^2 & \ll M^{2 (1 - \eta^4) \eta +\varepsilon} \cdot \frac{1}{T^2} \cdot T^{8A \eta^6} \cdot \Big ( T + M^{4 - \eta} \Big ) \cdot M^{4 - \eta  } 
\\&  
\ll \frac{M^{8 - 2 \eta^5 + 32A \eta^6+\varepsilon}}{T^2} \ll \frac{M^{8 - \eta^5}}{T^2} 
\end{align*}
for all sufficiently small $\eta > 0$. 
This shows \eqref{suppose} with $\rho = \frac{1}{2} \eta^5$ and completes the proof of Proposition \ref{prop9.1}. 
Moreover Heath-Brown's result \cite{HB} allows us to pick $A = \tfrac 12$ in which case any $0 < \eta < 1/16$ is admissible.


\section{Proof of Theorem \ref{thm:nonpoisson}}
  One simple reason why the sequence of eigenvalues is non-generic as far as the behaviour of minimal gaps is concerned, is that it is closed under multiplication by perfect squares, hence one small gap propagates. Indeed, 
let $\alpha > 0$ be arbitrary, and suppose \eqref{dev1} holds, that is $\delta_{\min}^{(\alpha)}(N) \leq \frac{1}{N \log N}$  infinitely often.  
Let $\lambda', \lambda'\ll N$  be two eigenvalues with $$0 < \lambda - \lambda' \leq \frac{1}{ N \log N}.$$ Then obviously $\tilde{\lambda} := 4\lambda$, $\tilde{\lambda}' := 4\lambda'$ are  eigenvalues with
 $$0 < \tilde{\lambda} - \tilde{\lambda'} \leq  \frac{4}{ N \log N},$$
so that 
$$\delta^{(\alpha)}_{\min, 2}(cN)  \leq \frac{4}{N \log N} \quad\quad \text{infinitely often}$$
for some suitable constant  $c$, violating \eqref{dev1a}.

\end{document}